\DeclareMathAlphabet{\mathssbx}{OT1}{cmss}{bx}{n}
\begin{document}

\setcounter{tocdepth}{1}

\def\iff{if, and only if,\ }
\def\g0{G^{(0)}}
\def\h0{H^{(0)}}
\def\maD{\mathcal{D}}
\def\maJ{\mathcal{J}}
\def\maK{\mathcal{K}}
\def\codim{\mathrm{codim}}

\renewcommand{\theenumi}{\alph{enumi}}
\renewcommand{\labelenumi}{\rm {({\theenumi})}}
\renewcommand{\labelenumii}{(\roman{enumii})}
%
%

\newcommand\ind{\operatorname{ind}}
\newcommand\End{\operatorname{End}}
\newcommand\per{\operatorname{per}}
\newcommand\pa{\partial}
\newcommand\sign{\operatorname{sign}}
\newcommand\supp{\operatorname{supp}}

\newcommand\CI{\mathcal{C}^\infty}
\newcommand\CO{\mathcal{C}_0}

\newcommand\CC{\mathbb C}
\newcommand\NN{\mathbb N}
\newcommand\RR{\mathbb R}
\newcommand\QQ{\mathbb Q}
\newcommand\ZZ{\mathbb Z}

\newcommand\CIc{{\mathcal C}^{\infty}_{\text{c}}}
\newcommand{\GR}{\mathcal G}


\newcommand\tgt[1]{{}^{T}\kern-1pt #1}
\newcommand\adi[1]{{}^{ad}\kern-1pt #1}
\newcommand{\fa}{\mathfrak{A}}
\newcommand{\faa}{\mathfrak{a}}

\newcommand\ie{{\em i.e.,\ }}
%
%
\newtheorem{theorem}{Theorem}
\newtheorem{proposition}{Proposition}
\newtheorem{corollary}{Corollary}
\newtheorem{lemma}{Lemma}
\newtheorem{definition}{Definition}
\newtheorem{notation}{Notations}
\theoremstyle{remark}
\newtheorem{remark}[theorem]{Remark}
\newtheorem{example}[theorem]{Example}
\newtheorem{examples}[theorem]{Examples}

%

%
%
\def\n#1#2{\| #1 \|_{{#2}}}
\def\g0{\GR^{(0)}}
\def\S{\mathscr{S}}
\def\cred#1{C^*_{\mathrm{r}}(#1)}
\def\rep#1{{\R_+^*}\!^{#1}}  
\def\r+#1{\R_+^{#1}}
\def\cinfo{{\mathcal C}^{\infty,0}}
\def\ccinfo{{\mathcal C}_c^{\infty,0}}
\def\Cc{{\mathcal C}_c}
\let \mx \mbox
\let \hx \hbox
\let \vx \vbox

\def\C{\mathbb{C}}
\def\E{\mathbb{E}}
\def\A{\mathbb{A}}
\def\B{\mathbb{B}}
\def\D{\mathbb{D}}
\def\F{\mathbb{F}}
\def\H{\mathbb{H}}
\def\N{\mathbb{N}}
\def\P{\mathbb{P}}
\def\Q{\mathbb{Q}}
\def\R{\mathbb{R}}
\def\Z{\mathbb{Z}}
\def\G{\mathbb{G}}
\def\K{\mathbb{K}}
\def\J{\mathbb{J}}
\def\T{\mathbb{T}}

\def\diagr#1{\def\normalbaselines{\baselineskip=0pt
\lineskip=10pt\lineskiplimit=1pt} \matrix{#1}}
\def\hfl#1#2{\smash{\mathop{\hx to 12mm{\rightarrowfill}}
\limits^{\scriptstyle#1}_{\scriptstyle#2}}}
\def\vfl#1#2{\llap{$\scriptstyle #1$}\left\downarrow
\vx to 6mm{}\right.\rlap{$\scriptstyle #2$}}
\def\antihfl#1#2{\smash{\mathop{\hx to 12mm{\leftarrowfill}}
\limits^{\scriptstyle#1}_{\scriptstyle#2}}}
\def\antivfl#1#2{\llap{$\scriptstyle #1$}\left\uparrow
\vx to 6mm{}\right.\rlap{$\scriptstyle #2$}}

\def\build#1_#2^#3{\mathrel{\mathop{\kern 0pt#1}\limits_{#2}^{#3}}}

\def\limind{\mathop{\oalign{lim\cr\hidewidth$\longrightarrow$\hidewidth\cr}}}
\def\limproj{\mathop{\oalign{lim\cr \hidewidth$\longleftarrow$\hidewidth\cr}}}

\def\norme#1{\left\| #1 \right\|}
\def\module#1{\left| #1 \right|}
\def\va#1{\left| #1 \right|}
\def\scal#1{\left\langle #1 \right\rangle}
\def\inv#1{{#1}^{-1}}
\def\invf{f^{-1}}

\def\psd{pseudodiff\'erentiel}
\def\dg{\partial G}
\def\dm{\partial M}
\def\dx{\partial X}

\def\intm{\overset{\:\bullet}{M}}
\def\intx{\overset{\:\bullet}{X}}
\def\intf#1{\overset{\:\bullet}{#1}}
\def\cc#1{C^*(#1)}
\def\kc#1{K_*(C^*(#1))}
\def\rep#1{{\R_+^*}^{#1}}
\def\r+#1{\R_+^{#1}}

\def\tg{groupo\"\i de tangent}
\def\gr{groupoid}
\def\d{{\rm d}}
\def\e{{\varepsilon}}
\def\cstar{$C^*$-algebra}
\def\rbar{\overline\R}
\def\l{\lambda}
\def\kth{$K$-theory}
\def\cinf{$C^\infty$}
\def\M{M}
\def\G{{\bf G}}
\def\E{\mathcal{E}}
\def\In{\hbox{In}}
\def\pd{pseudodifferential}

\def\k{\mathssbx{k}}

\def\lb{\hbox{lb}}
\def\rb{\hbox{rb}}

\def\bsp{$b$-stretched product}
\def\bcalc{$b$-calculus}

\def\sm{submanifold}
\def\mc{manifold with corners}
\def\mecs{manifolds with embedded faces}
\def\mec{manifold with embedded faces}
\def\mcs{manifolds with corners}

\def\D{\hbox{D}}
\def\S{\mathcal S}

\def\A{\mathcal A}
\def\F{\mathcal F}
\def\HF{\mathcal{HF}}
\def\sym{\mathfrak S}
\def\GG{\mathcal G}
\def\sh{\mathrm{sh}}
\def\ch{\mathrm{ch}}

\def\fr{\frac}
\def\ub{\underbar}
\def\O{\Cal O}
\def\F{\Cal F}
\def\differ{\text{differentiable} }
\def\tPSeudo{\text{pseudodifferential} }
\def\supp{\text{supp} }
\def\inn{{\mathcal R}}
\def\frag{\frak{G}}
\def\simd{\tilde{d}}
\def\simf{\tilde{\F}}
\def\simo{\tilde{\O}}
\def\simr{\tilde{r}}
\def\simp{\tilde{p}}
\def\simmu{\tilde{\mu}}
\def\O12{\Omega^\frac{1}{2}}

\def\maG{\mathcal{G}}
\def\maH{\mathcal{H}}
\def\maK{\mathcal{K}}
\def\maI{\mathcal{I}}
\def\maL{\mathcal{L}}
\def\maV{\mathcal{V}}
\newcommand{\Diff}{\operatorname{Diff}}
\newcommand{\Diffb}{\Diff_b(M)}
\newcommand{\DiffVM}{\operatorname{\Diff_{\maV}(M)}}
\newcommand{\PsiVM}{\Psi_\maV^\infty(M)}
\newcommand{\Psib}{\Psi_b^\infty(M)}

\date\today
\author[B. Monthubert]{Bertrand Monthubert}
       \address{ Universit\'e Paul
       Sabatier (UFR MIG), Institut de Math\'ematiques de Toulouse,
       F-31062 Toulouse CEDEX 4}
       \email{bertrand.monthubert@math.univ-toulouse.fr}

\author[V. Nistor]{Victor Nistor} \address{Pennsylvania State
       University, Math. Dept., University Park, PA 16802}
       \email{nistor@math.psu.edu}

\thanks{Monthubert was partially supported by a ACI Jeunes
       Chercheurs. Manuscripts available from {\bf
       http:{\scriptsize//}bertrand.monthubert.net}.
       Nistor was partially supported by the NSF Grant DMS
       0555831. Manuscripts available from {\bf
http:{\scriptsize//}www.math.psu.edu{\scriptsize/}nistor{\scriptsize/}}.}

\begin{abstract}
We compute the $K$-theory of comparison $C^*$-algebra associated to a
manifold with corners. These comparison algebras are an example of the
abstract pseudodifferential algebras introduced by Connes and
Moscovici \cite{M3}. Our calculation is obtained by showing that
the comparison algebras are a homomorphic image of a groupoid
$C^*$-algebra. We then prove an index theorem with values in the
$K$-theory groups of the comparison algebra.
\end{abstract}

\title[Comparison $C^*$-algebras]{The $K$-groups and the index theory
  of certain comparison $C^*$-algebras}

\maketitle \tableofcontents

\section*{Introduction}

The work of Henri Moscovici encompasses many areas of mathematics,
most notably Non-commutative Geometry, Group Representations,
Geometry, and Abstract Analysis. His work on Non-commutative geometry,
mostly joint works with Alain Connes, has lead to many breakthroughs
in Index Theory and Operator Algebras, as well as to applications to
other areas. We are happy to dedicate this paper to Henri Moscovici on
the occasion of his 65th birthday.

The problem studied in this paper pertains to the general program of
understanding index theory on singular and non-compact spaces. On such
spaces, the Fredholm property depends on more than the principal
symbol, so cyclic cocycles are needed in order to obtain explicit
index formulas. Moscovici has obtained many results in this direction,
including \cite{M7, M4, M6, M3, M5,M8}.  See also
\cite{BenameurHeitsch, Gorokhovsky, Connes1, Connes2, NestTsygan}.

One of the central concepts in a recent paper by Connes and Moscovici,
is that of an abstract algebra of pseudodifferential operators
\cite{M3}. These algebras generalize similar algebras introduced
earlier. In this paper, we would like to study certain natural
$C^*$-algebras associated to non-compact Riemannian manifolds,
applying in particular the point of view of the work of Connes and
Moscovici mentioned above.

Let us now explain the framework of this paper. Let $M_0$ be a
complete Riemannian manifold and let $\Delta = d^*d$ be the {\em
  positive} Laplace operator on $M_0$ associated to the metric. It is
well known that $\Delta$ is essentially self-adjoint \cite{Cordes92,
  Strichartz} and the references therein, and hence we can define
$\Lambda = (1 + \Delta)^{-1/2}$ using functional calculus. Let us also
assume that a certain algebra $\maD = \cup \maD_n$ of differential
operators is given on $M_0$, where $\maD_n$ denotes the space of
differential operators in $\maD$ of degree at most $n$. Let us assume
that $\Delta \in \maD_2$ and that $L_n \Lambda^n$ defines a bounded
operator on $L^2(M_0)$ for any $L_n \in \maD_n$.  Then the {\em
  comparison algebra} of $M_0$ (and $\maD$) to be the $C^*$-algebra
generated by the operators of the form $L_n \Lambda^n$ for any $L_n
\in \maD_n$. This definition is almost the same as the on in
\cite{Cordes77, CordesBook}, where the comparison algebra was defined
as the $C^*$-algebra generated by all operators of the form $L_1
\Lambda$ for any $L_1 \in \maD_1$ {\em and all compact operators}. One
of our results, Theorem \ref{theorem.4}, implies that the two
definitions are the same for suitable $M_0$. Let us denote the
comparison $C^*$-algebra of $M_0$ by $\fa(M_0)$ (the dependence on the
algebra $\maD$ will be implicit). The comparison algebra $\fa(M_0)$ is
a convenient tool to study many analytic properties of differential
operators on $M_0$, such as invertibility between Sobolev spaces,
spectrum, compactness, the Fredholm property, and the index
\cite{ConnesNCG, Cordes77, Georgescu, LMN1, LNgeometric,
  TaylorGelfand}. For instance, the principal symbol of order zero
pseudodifferential operators extends to a continuous map $\sigma_0 :
\fa(M_0) \to C(S^*A)$ with kernel denoted $\fa_{-1}(M_0)$, where
$S^*A$ is a suitable compactification of the cosphere bundle of
$T^*M_0$.

In this paper, we concentrate on the index properties of elliptic
operators on a certain class of non-compact manifolds, called
``manifolds with poly-cylindrical ends.''  Recall that a {\em manifold
  with poly-cylindrical ends} is, locally, a product of manifolds with
cylindrical ends.  Our index depends only on the principal symbol, so
it takes values in the $K$-theory of the $C^*$-algebra
$\fa_{-1}(M_0)$. Index calculations are sometimes necessary in
applications, for instance in the study of Hartree's equation
\cite{HunsickerNS} and in the study of boundary value problems on
polyhedral domain \cite{HenggMN}.

More precisely, let us first assume that the given manifold $M_0$ is
the interior of the space of units of a groupoid Lie $\GR$. Then the
Lie groupoid structure of $\GR$ gives rise to a natural algebra of
differential operators $\maD$ on $M_0$, such as in the case of
singular foliations \cite{ConnesNCG, Skandalis}. In general, there
will be no natural metric on $M_0$, and even if a metric is chosen on
$M_0$, the associated Laplace operator $\Delta \not\in \maD$. However,
if $M_0$ is a Lie manifold \cite{ALN1}, then a natural class of
metrics exists on $M_0$ and $\Delta \in \maD$ for any metric in this
class. Recall that $M_0$ is a {\em Lie manifold} if the tangent bundle
$TM_0$ extends to a bundle $A \to M $ on a compactification $M$ of
$M_0$ to a manifold with corners such that the space of smooth section
$\maV := \Gamma(A)$ of $A$ has a natural Lie algebra structure induced
by the Lie bracket of vector fields and such that that the
diffeomorphisms generated by vector fields in $\maV$ preserve the
faces of $M$ \cite{ALN1}.

We shall show that the comparison algebra of a Lie manifold $M_0$
identifies with a subalgebra of a homomorphic image of a groupoid
(pseudodifferential) algebra. For any manifold with corners, we shall
denote by $\maV_M$ the Lie algebra of all vector fields tangent to all
faces of $M$. Then $\maV_M = \Gamma(A_M)$ for a unique (up to
isomorphism) vector bundle $A_M \to M$. If the vector bundle $A \to M$
defining a Lie manifold $M$ satisfies $A = A_M$, then we shall say
that $M_0$, the interior of $M$, is a {\em manifold with
  poly-cylindrical ends}. In that case, we prove that $\fa(M_0)$ is
(isomorphic to) the norm closure of $\Psi^0(\maG)$.  We then use this
result to compute the $K$-theory of the algebra $\fa_{-1}(M_0)$ and
the index in $K_0(\fa_{-1}(M_0))$ of elliptic operators in the
comparison algebra of a manifold with poly-cylindrical ends.

Let us explain in a little more detail our results. Let us assume that
our algebra $\maD$ of differential operators is generated by $\CI(M)$
and $\maV$. Let $D \in \maD$, then the principal symbol of $D$ extends
to a symbol defined on $A^*$. Assume that $D$ is elliptic, in the
sense that its principal symbol is invertible on $A^*$ outside the
zero section. Then the $K$-theory six-term exact sequence applied to
the tangent (or adiabatic) groupoid of $\GR$ defines a map
\begin{equation}
    \ind_a = \ind_a^M : K^0(A^*) \to K_0(\fa_{-1}(M_0)).
\end{equation}
One of our main results is a computation of the groups
$K_0(\fa_{-1}(M_0))$ and of the map $\ind_a$ in case $M_0$ is a
manifold with poly-cylindrical ends.

Our calculation of the group $K_0(\fa_{-1}(M_0))$ is as follows.
Consider an embedding $\iota : M \to X$ of manifolds with corners and
let $\iota_{!}$ be the push-forward map in $K$-theory. Morita
equivalence then gives rise to a morphism $\iota_* : K_0(C^*(M)) \to
K_0(C^*(X))$. Then Theorem \ref{commutativity} states that the
following diagram commutes
\begin{equation}\label{diag.I}
\begin{CD}
    K_0(C^*(M)) @>{\iota_*}>> K_0(C^*(X))\\
    @A{\ind_a^M}AA  @AA{\ind_a^X}A\\
    K^0(A_M^*) @>{\iota_{!}}>>  K^0(A_X^*).\\
\end{CD}
\end{equation}

If the manifold with corners $X$ is such that the natural morphisms
$\iota_* : K_0(C^*(M)) \to K_0(C^*(X))$ and $\ind_a^X : K^0(A_X^*) \to
K_0(C^*(X))$ are isomorphisms, we are going to say that $X$ is a {\em
  classifying space} for $M$. In that case, we can interpret the above
diagram as a topological index theorem in the usual sense. We also
obtain an identification of the groups $K_0(C^*(M))$ and of the map
$\ind_a^M$.

Let us now very briefly summarize the contents of the paper. In
Section \ref{grComp.index}, we introduce comparison algebras and we
show that they are closely related to groupoid algebras. We show that
the groupoid $C^*$-algebra $\overline{\Psi^0(\GR_M)}$ and the
comparison $C^*$-algebra $\fa(M_0)$ are in fact isomorphic for $M_0$ a
manifold with poly-cylindrical ends with compactification $M$. In
Section \ref{a.index}, we recall the definition of the full
$C^*$-analytic index using the tangent groupoid. In the process, we
establish several technical results on tangent groupoids. Section
\ref{sec.prop} contains the main properties of the full $C^*$-analytic
index. In this section, we also introduce the morphism $j_*$
associated to an embedding of manifolds with corners $j$ and we
provide conditions for $j_*$ and $\ind_a^M$ to be isomorphisms.  We
also discuss he compatibility of the full $C^*$-analytic index and of
the shriek maps. This is then used to establish the equality of the
full $C^*$-analytic and principal symbol topological index. Some of
the proofs in this paper are only sketched. See \cite{MN} for full
details.

\thanks{We thank Bernd Ammann, Catarina Carvalho, Sergiu Moroianu, and
  Georges Skandalis for useful discussions.  The second named author
  would like to thank the Max Planck Institute for Mathematics Bonn,
  where part of this work was completed, for hospitality and support.}

\section{Groupoids and comparison algebras\label{grComp.index}}

We shall need to consider pseudodifferential operators on groupoids
\cite{bm-fp, NWX}.  For simplicity, {\em we shall assume from now on
  that all our manifolds with corners have embedded faces.}

\subsection{Pseudodifferential operators on groupoids}
Throughout this paper, we shall fix a Lie groupoid $\GR$ with units
$M$ and Lie algebroid $A = A(\GR)$.  Here $M$ is allowed to have
corners.  To $\GR$, there is associated the pseudodifferential
calculus $\Psi^{\infty}(\GR)$, whose operators of order $m$ form a
linear space denoted $\Psi^{m}(\GR)$, $m \in \RR$, see \cite{bm-fp,
  NWX}.  For short, this calculus is defined as follows. Let $s : \GR
\to M$ be the source map and $\GR_x = s^{-1}(x)$. Then
$\Psi^{m}(\GR)$, $m \in \Z$, consists of smooth families of classical,
order $m$ pseudodifferential operators $(P_x \in \Psi^m(\GR_x))$, $x
\in M$, that are right invariant with respect to multiplication by
elements of $\GR$ and are ``uniformly supported.''  To define what
uniformly supported means, let us observe that the right invariance of
the operators $P_x$ implies that their distribution kernels $K_{P_x}$
descend to a distribution $k_P \in I^m(\GR, M)$ \cite{bm-these,
  NWX}. Then the family $P = (P_x)$ is called {\em uniformly
  supported} if, by definition, $k_P$ has compact support in $\GR$.

We then have the following result \cite{LMN1, bm1, NWX}.

\begin{theorem}\label{thm.quant}\  The space
$\Psi^{\infty}(\GR)$ is a filtered algebra, closed under adjoints, so
  that the usual principal symbol of pseudodifferential operators
  defines a surjective
\begin{equation*}
    \sigma_\GR^{(m)} : \Psi^m(\GR) \to
    S^m_{cl}(A^*)/S^{m-1}_{cl}(A^*),
\end{equation*}
with kernel $\Psi^{m-1}(\GR)$, for any $m \in \ZZ$.
\end{theorem}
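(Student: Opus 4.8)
The plan is to establish each of the three assertions — the algebra structure, the closure under adjoints, and the symbol exact sequence — by reducing everything to the fiberwise situation on each $s$-fiber $\GR_x$, where the classical pseudodifferential calculus on the manifold $\GR_x$ is already known, and then carefully tracking the two extra features of the groupoid calculus: right invariance and uniform support. The key point throughout is that these two conditions are preserved by the operations in question, so that the classical fiberwise results assemble into statements about $\Psi^\infty(\GR)$.

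First I would verify the filtered algebra structure. Given $P = (P_x) \in \Psi^m(\GR)$ and $Q = (Q_x) \in \Psi^{m'}(\GR)$, the composition $PQ = (P_x Q_x)$ is fiberwise a classical pseudodifferential operator of order $m+m'$ by the standard composition theorem on each $\GR_x$. Right invariance is immediate since it is preserved under composition of right-invariant families. The subtle point is \emph{uniform support}: one must check that the reduced kernel $k_{PQ}$ has compact support, which follows from the fact that the convolution of two compactly supported distributions on $\GR$ (in the groupoid convolution sense) again has compact support, using that the multiplication map $\GR^{(2)} \to \GR$ is proper on the relevant supports. This gives $\Psi^m(\GR)\,\Psi^{m'}(\GR) \subseteq \Psi^{m+m'}(\GR)$ and hence the filtration.

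Next I would treat closure under adjoints. For each $x$, the fiberwise formal adjoint $P_x^*$ (with respect to the fixed density on $\GR_x$, which exists because the $\GR_x$ carry a right-invariant system of measures) is again classical of order $m$. Right invariance of the family $(P_x^*)$ and preservation of uniform support — the kernel of the adjoint being obtained from $k_P$ by the inversion map $g \mapsto g^{-1}$, which is a diffeomorphism of $\GR$ preserving compact supports — yield $P^* \in \Psi^m(\GR)$.

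Finally, for the symbol sequence, the map $\sigma_\GR^{(m)}$ is defined by taking the fiberwise principal symbol, which is well defined on $A^*$ because right invariance forces the fiberwise symbols on $T^*\GR_x$ to descend to a single symbol on the algebroid dual $A^* = (\ker ds)^*|_M$. Surjectivity follows by choosing, for any given symbol $a \in S^m_{cl}(A^*)$, a right-invariant properly supported quantization — the standard construction of a pseudodifferential operator from its symbol, applied fiberwise and made uniform by cutting off near the units — which produces a $P \in \Psi^m(\GR)$ with $\sigma_\GR^{(m)}(P) = a \bmod S^{m-1}_{cl}$. Multiplicativity and the compatibility with adjoints are inherited from the fiberwise calculus. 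The identification of the kernel as exactly $\Psi^{m-1}(\GR)$ is the statement that a uniformly supported, right-invariant family has vanishing principal symbol of order $m$ \iff its fiberwise order drops to $m-1$, which is precisely the classical fact applied fiberwise together with the observation that uniform support is unaffected.

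I expect the main obstacle to be the uniform-support bookkeeping rather than the symbolic analysis: one must show that each operation (composition, adjoint, and the quantization used for surjectivity) keeps the reduced kernel compactly supported in $\GR$, which requires a careful properness argument for the groupoid multiplication and inversion maps and a controlled cutoff in the quantization step. The purely fiberwise pseudodifferential statements are standard and I would invoke them directly, citing \cite{bm-fp, NWX}, rather than reprove them.
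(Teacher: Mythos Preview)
Your sketch is correct and follows exactly the standard argument found in the references \cite{LMN1, bm1, NWX}; note however that the paper does not give its own proof of this theorem at all --- it is stated as a known result with citation, so there is no paper-proof to compare against beyond those references. Your outline (fiberwise reduction plus checking that right invariance and uniform support survive composition, adjoint, and quantization) is precisely the route taken in \cite{NWX, bm-fp}.
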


\subsection{Comparison algebras}
We shall denote by $\pi$ the natural action of $\Psi^{\infty}(\GR)$ on
$\CIc(M_0)$ or on its completions and by $\faa_{-\infty}$ the
completion of $\pi(\Psi^{-\infty}(\GR))$ acting on all Sobolev spaces
$H^{-m}(M_0) \to H^{m}(M_0)$.  Let us denote $\faa_n :=
\pi(\Psi^{n}(\GR)) + \faa_{-\infty}$. The following result was proved
in \cite{LMN2} (see also \cite{ALN1, LNgeometric}).

\begin{theorem}\label{thm.faa}\ 
The space $\faa := \pi(\Psi(\GR)) + \faa_{-\infty}$ is a filtered
algebra by the pseudodifferential degree such that $\faa_0 :=
\pi(\Psi^0(\GR)) + \faa_{-\infty}$ consists of bounded operators, is
closed under the adjoint, and is spectrally invariant. Moreover,
$\faa_{-\infty}$ is a two-sided ideal of $\faa$ and 
\begin{equation*} 
	\Lambda := (1 + \Delta)^{-1/2} \in \faa_{-1} 
	:= \pi(\Psi^{-1}(\GR)) + \faa_{-\infty}.
\end{equation*}
\end{theorem}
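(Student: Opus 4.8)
The plan is to establish each claim of Theorem~\ref{thm.faa} by working at the level of the pseudodifferential calculus $\Psi^\infty(\GR)$ and then transporting the relevant properties through the representation $\pi$, using the ideal $\faa_{-\infty}$ to absorb smoothing-order subtleties. First I would verify the filtered-algebra structure: since $\Psi^\infty(\GR)$ is a filtered algebra by Theorem~\ref{thm.quant}, the images $\pi(\Psi^n(\GR))$ satisfy $\pi(\Psi^m(\GR))\,\pi(\Psi^n(\GR))\subseteq\pi(\Psi^{m+n}(\GR))$. To see that $\faa_{-\infty}$ is a two-sided ideal, note that $\Psi^{-\infty}(\GR)$ is a two-sided ideal in $\Psi^\infty(\GR)$; composing an operator of order $n$ with a regularizing operator lowers the order to $-\infty$, and the completion $\faa_{-\infty}$ in the topology of bounded maps $H^{-m}(M_0)\to H^m(M_0)$ is preserved under composition with any $\pi(P)$, $P\in\Psi^n(\GR)$, by the standard Sobolev mapping properties of pseudodifferential operators on groupoids. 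This gives at once that $\faa$ is a filtered algebra and that $\faa_{-\infty}$ is a two-sided ideal.

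Next I would treat the boundedness and involutive structure of $\faa_0$. Boundedness of $\pi(\Psi^0(\GR))$ on $L^2(M_0)$ is the groupoid analogue of the Calder\'on--Vaillancourt estimate, which follows from Theorem~\ref{thm.quant} together with the standard $L^2$-continuity of order-zero operators in the calculus; the $\faa_{-\infty}$ summand is bounded by construction. Closure under the adjoint follows because $\Psi^\infty(\GR)$ is closed under adjoints (again Theorem~\ref{thm.quant}) and $\pi$ is a $*$-representation, so $\pi(P)^*=\pi(P^*)$, while $\faa_{-\infty}$ is manifestly self-adjoint as a completion of the self-adjoint space $\pi(\Psi^{-\infty}(\GR))$. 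Spectral invariance of $\faa_0$ is the most delicate of these points: the argument is that if $a\in\faa_0$ is invertible in the ambient $C^*$-algebra $\cred{\GR}$ (or in $\mathcal{B}(L^2(M_0))$), then its inverse again lies in $\faa_0$. This is proved by a parametrix construction using the surjectivity of the principal symbol in Theorem~\ref{thm.quant}: one inverts the symbol to order zero, obtains an approximate inverse modulo $\faa_{-\infty}$, and uses a Neumann-series argument to correct the remainder within the ideal, invoking the fact that $\faa_{-\infty}$ consists of operators smoothing between all Sobolev scales.

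The remaining claim, $\Lambda=(1+\Delta)^{-1/2}\in\faa_{-1}$, is where the real work lies. Since $\Delta\in\maD_2=\pi(\Psi^2(\GR))$ is an elliptic positive operator in the calculus, the resolvent $(1+\Delta)^{-1}$ has a parametrix in $\Psi^{-2}(\GR)$ constructed from the inverse of its principal symbol; the difference between the true resolvent and this parametrix is a smoothing operator lying in $\faa_{-\infty}$. To handle the square root $\Lambda=(1+\Delta)^{-1/2}$ rather than the resolvent, I would use the holomorphic (or Dunford) functional calculus, writing
\begin{equation*}
    \Lambda = \frac{1}{2\pi i}\int_\Gamma \lambda^{-1/2}(\lambda - (1+\Delta))^{-1}\,d\lambda
\end{equation*}
over a suitable contour $\Gamma$, and showing that each resolvent $(\lambda-(1+\Delta))^{-1}$ lies in $\faa_{-2}$ with norm estimates uniform enough in $\lambda$ to guarantee that the integral converges in $\faa_{-1}$. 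The key inputs are that $\Psi^\infty(\GR)$ admits resolvents of elliptic elements with the expected order drop and that $\faa$ is closed under this functional calculus; this is essentially the content of the parametrix and spectral-invariance machinery assembled in the previous steps, now applied along the contour. The main obstacle, therefore, is verifying spectral invariance together with the uniform resolvent estimates needed to push the holomorphic functional calculus through $\faa$ and land $\Lambda$ precisely in the order $-1$ component; this is exactly the technical heart cited to \cite{LMN2}, and I would lean on those estimates rather than reprove them in detail.
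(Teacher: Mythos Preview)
The paper does not actually prove Theorem~\ref{thm.faa}; it is stated with the attribution ``The following result was proved in \cite{LMN2} (see also \cite{ALN1, LNgeometric})'' and no argument is given. So there is no in-paper proof to compare against, and your proposal is effectively a reconstruction of what such a proof should look like. In that light your outline is reasonable and hits the correct architecture: filtration and ideal properties from Theorem~\ref{thm.quant}, $L^2$-boundedness of order zero, closure under adjoints from the $*$-structure, and then the two genuinely nontrivial items, spectral invariance of $\faa_0$ and membership $\Lambda\in\faa_{-1}$.

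Two points deserve sharpening if you want this to stand on its own rather than defer to \cite{LMN2}. First, your spectral-invariance sketch starts by inverting the principal symbol, but invertibility of $a\in\faa_0$ in $\mathcal B(L^2(M_0))$ does not by itself hand you invertibility of $\sigma_0(a)$; you need an auxiliary argument (for instance, that $\faa_0/\faa_{-\infty}$ is already spectrally invariant in its $C^*$-closure, or a direct commutator/Beals-type criterion) before the parametrix step is available. Second, your contour-integral formula is written for the holomorphic functional calculus of a bounded operator, whereas $1+\Delta$ is unbounded; one has to use either Seeley's complex powers in the groupoid calculus or the real-integral representation $(1+\Delta)^{-1/2}=\tfrac{2}{\pi}\int_0^\infty (1+\Delta+t^2)^{-1}\,dt$, together with uniform order-$(-2)$ parametrix estimates for the shifted resolvents, to conclude that the integral lands in $\faa_{-1}$. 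You flag both of these as the hard steps and cite \cite{LMN2} for them, which is exactly what the paper itself does.
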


From the above theorem we obtain that the comparison algebra is a
subalgebra of the norm closure of $\faa_0$.

\begin{theorem} \label{theorem.3}
Let $M_0$ be a Lie manifold. Then we have that
\begin{equation*}
  \fa(M_0) \subset \overline{\faa_0} = \pi(\overline{\Psi^0(\GR)}).
\end{equation*}
Moreover, $\fa(M_0)$ contains all compact operators.
\end{theorem}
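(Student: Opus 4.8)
The plan is to establish the two assertions of Theorem \ref{theorem.3} separately: first the inclusion $\fa(M_0) \subset \overline{\faa_0}$, and second the claim that all compact operators lie in $\fa(M_0)$. The main tool for the first part is Theorem \ref{thm.faa}, which already tells us that $\Lambda = (1+\Delta)^{-1/2} \in \faa_{-1}$ and that $\faa$ is a filtered algebra. Since $M_0$ is a Lie manifold, its natural algebra of differential operators $\maD$ is (by the discussion preceding the statement) generated by $\CI(M)$ and $\maV = \Gamma(A)$, and every such differential operator is the image under $\pi$ of an element of $\Psi^{\infty}(\GR)$; more precisely $\maD_n \subset \pi(\Psi^n(\GR))$ because the enveloping algebra of $\maV$ is realized inside the groupoid calculus as the order-$\le n$ differential operators.

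First I would take an arbitrary generator $L_n\Lambda^n$ of the comparison algebra, with $L_n \in \maD_n$, and show it lies in $\faa_0$. The point is that $L_n \in \pi(\Psi^n(\GR))$ while $\Lambda^n \in \faa_{-n}$ by the filtration property of Theorem \ref{thm.faa} (since $\Lambda \in \faa_{-1}$ and $\faa$ is filtered, $\Lambda^n \in \faa_{-n}$). Because $\faa$ is a filtered algebra, the product $L_n\Lambda^n$ lands in $\faa_n \cdot \faa_{-n} \subset \faa_0$, and Theorem \ref{thm.faa} asserts that $\faa_0$ consists of bounded operators. Hence each generator is a bounded operator in $\faa_0$, and the $C^*$-algebra they generate — namely $\fa(M_0)$ by definition — is contained in the norm closure $\overline{\faa_0}$. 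The identification $\overline{\faa_0} = \pi(\overline{\Psi^0(\GR)})$ follows because $\faa_{-\infty}$ is contained in the closure of $\pi(\Psi^{-\infty}(\GR)) \subset \pi(\Psi^0(\GR))$, so adjoining it to $\pi(\Psi^0(\GR))$ does not enlarge the norm closure.

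For the second assertion, that $\fa(M_0)$ contains all compact operators on $L^2(M_0)$, I would argue as follows. The regularizing operators $\pi(\Psi^{-\infty}(\GR))$ acting on $L^2(M_0)$ include, in particular, operators given by smoothing kernels supported in the interior, and the comparison algebra contains products such as $L_1\Lambda$ together with their adjoints and iterated products; by choosing $L_n$ appropriately and using that $\Lambda$ is order $-1$, one produces operators of arbitrarily negative order whose Schwartz kernels are smooth and compactly supported in $M_0\times M_0$, which are Hilbert–Schmidt and in fact trace class, hence compact. The key is to show that the ideal these generate is all of $\maK(L^2(M_0))$; this is the standard fact that a $C^*$-algebra of operators on $L^2$ containing enough smoothing operators with arbitrary interior kernels, and acting irreducibly on $\CIc(M_0)$, must contain the full compact ideal.

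The hard part will be the last step: verifying that the compact operators obtained really exhaust $\maK(L^2(M_0))$ rather than merely producing some compact operators. This requires knowing that $\fa(M_0)$ acts irreducibly enough — or that $\pi$ restricted to the regularizing ideal has dense range in the compacts — which in turn uses the density of $\CIc(M_0)$ in $L^2(M_0)$ and the fact that the groupoid $\GR$ has $M_0$ as an open dense orbit so that the smoothing kernels supported over $M_0$ separate points. I would handle this by identifying $\maK(L^2(M_0))$ with the restriction to the main orbit of the reduced groupoid algebra and invoking the irreducibility of that representation; since the excerpt says some proofs are only sketched, I expect to give this argument in outline and refer to \cite{MN} for the routine verification.
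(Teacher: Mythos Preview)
Your argument for the inclusion $\fa(M_0) \subset \overline{\faa_0}$ is correct and is exactly what the paper does: $L_n \in \maD_n \subset \faa_n$, $\Lambda^n \in \faa_{-n}$, hence $L_n\Lambda^n \in \faa_0$, and taking the $C^*$-closure gives the result.

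For the second assertion your plan has a real gap. You want to produce compact operators inside $\fa(M_0)$ by taking products of generators that yield smoothing kernels ``compactly supported in $M_0\times M_0$,'' and then invoke irreducibility of the groupoid representation restricted to the main orbit. But you never explain which products of elements $L_n\Lambda^n$ actually give such kernels; $\Lambda$ itself has no support control, and an appeal to $\pi(\Psi^{-\infty}(\GR))$ is beside the point since that algebra is not a priori contained in $\fa(M_0)$. Likewise, using the groupoid picture to get irreducibility is circuitous: the whole purpose here is to pin down the intrinsically defined comparison algebra before identifying it with the groupoid side, so you should argue with objects that are manifestly in $\fa(M_0)$.

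The paper's proof supplies exactly the missing concrete mechanism. Since $1 \in \maD_1$, one has $\Lambda \in \fa(M_0)$, and by continuous functional calculus $e^{-t\Delta}$ (a $C_0$-function of $\Lambda$) lies in $\fa(M_0)$. Multiplying on both sides by $\phi,\psi \in \CIc(M_0) \subset \maD_0$ gives compact operators $\phi\, e^{-t\Delta}\,\psi \in \fa(M_0)$. Irreducibility is then obtained not via groupoids but from the strict positivity of the heat kernel: if $(f,\phi e^{-t\Delta}\psi g)=0$ for all $\phi,\psi$, approximate $\phi_n f \to |f|$ and $\psi_n g \to |g|$ to get $(|f|, e^{-t\Delta}|g|)=0$, forcing $f=0$ or $g=0$. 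This is the idea you are missing; once you have it, the standard fact that an irreducible $C^*$-algebra containing a nonzero compact operator contains all of $\maK$ finishes the proof.
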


\begin{proof} 
Let $L_n \in \maD_n$. Since $\Lambda \in \faa_{-1}$ and $\maD_n
\subset \faa_n := \pi(\Psi^{-1}(\GR)) + \faa_{-\infty}$.  It follows
that $L_n \Lambda^n \in \faa_0$, and hence the result.

To show that $\fa(M_0)$ contains the subalgebra of compact operators,
let us notice first that $e^{-t\Delta} \in \fa(M_0)$, since it can be
written as a function of $\Lambda := (1 + \Delta)^{-1/2}$. Since
$\CIc(M_0) \subset \CI(M)$, we have that $\phi e^{-t\Delta} \psi
\subset \fa(M_0)$, and hence the later contains compact operators. To
show that all compact operators are in $\fa(M_0)$, it is enough to
show that $\fa(M_0)$ has no invariant subspaces. That is, it is enough
to show that if $f, g \in L_2(M_0)$ are such that the inner product
$(f ,\phi e^{-t\Delta} \psi g)$ is zero for all $\phi, \psi \in
\CIc(M_0)$, then either $f=0$ or $g=0$. Indeed, let us choose $\phi_n,
\psi_n \in \CIc(M_0)$ such that $\phi_n f \to |f|$ and $\psi_n g \to
|g|$ in $L^2(M_0)$. Then
\begin{equation*}
	(|f| ,e^{-t\Delta} |g|) = \lim_{n\to \infty} (f ,\phi_n
  e^{-t\Delta} \psi_n g) = 0,
\end{equation*} 
which implies that either $f=0$ or $g=0$, since the heat kernel
$e^{-t\Delta}$ has positive distribution kernel.
\end{proof}

Let $M_0$ be a manifold with poly-cylindrical ends and $\GR$ be a
groupoid such that $A(\GR) = A_M$, where, we recall, $A_M \to M$ is a
vector bundle such that $\maV_M := \Gamma(A_M)$ consists of all smooth
vector fields on $M$ that are tangent to all the faces of $M$.  A
groupoid $\GR$ with this property is said {\em to integrate} $A_M$,
and is not unique. However, if the fibers of the source map $s : \GR
\to M$ are all connected and simply-connected, then $\GR$ is unique
(up to isomorphism) and will be denoted $\GR_M$. For $\GR_M$ the
vector representation $\pi$ is injective \cite{LNgeometric, bm1}. We
shall also denote by $C^*(M) = C^*(\GR_M)$. Recall that $C^*(\GR_M) =
\overline{\Psi^{-1}(\GR)}$, \cite{LNgeometric, bm1}.

The algebra $\Psi(\GR_M)$ was considered before by many authors,
including \cite{MelroseScattering, ScSc1, SchulzeBook}. For this
algebra, we actually have equality in the above theorem.

\begin{theorem} \label{theorem.4} 
Let $M_0$ be a manifold with poly-cylindrical ends.  Then
\begin{equation*}
  \fa(M_0) = \overline{\faa_0} \simeq \overline{\Psi^0(\GR_M)}.
\end{equation*}
\end{theorem}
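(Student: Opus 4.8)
The plan is to treat the two assertions separately. The isomorphism $\overline{\faa_0}\simeq\overline{\Psi^0(\GR_M)}$ is immediate: by Theorem~\ref{theorem.3} we have $\overline{\faa_0}=\pi(\overline{\Psi^0(\GR)})$, and for $\GR=\GR_M$ the vector representation $\pi$ is injective, so $\pi$ restricts to an isometric $*$-isomorphism $\overline{\Psi^0(\GR_M)}\to\overline{\faa_0}$. The real content is therefore the equality $\fa(M_0)=\overline{\faa_0}$, of which $\subseteq$ is Theorem~\ref{theorem.3}; I only need the reverse inclusion. For this I would exploit the principal-symbol extension. Under the identification above, both $\fa(M_0)$ and $\overline{\Psi^0(\GR_M)}$ carry the order-zero symbol $\sigma_0$ with values in $C(S^*A)$, and by Theorem~\ref{thm.quant} the kernel of $\sigma_0$ on $\overline{\Psi^0(\GR_M)}$ is $\overline{\Psi^{-1}(\GR_M)}=C^*(\GR_M)=C^*(M)$. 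Thus the two algebras sit inside a single extension of $C(S^*A)$ by $C^*(M)$, and a routine diagram chase (comparing ideals and quotients) shows that $\fa(M_0)=\overline{\Psi^0(\GR_M)}$ will follow once I establish: (A) $C^*(M)\subseteq\fa(M_0)$, i.e. the residual ideal lies in the comparison algebra; and (B) $\sigma_0(\fa(M_0))=C(S^*A)$, i.e. the symbol map is already onto on $\fa(M_0)$.

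Step (B) I would settle by Stone--Weierstrass. For $L_n\in\maD_n$ the order-zero symbol of $L_n\Lambda^n$ is $\sigma_0(L_n\Lambda^n)=\sigma_n(L_n)\,|\xi|^{-n}$, a degree-zero homogeneous function on $A^*\setminus 0$, hence a genuine element of $C(S^*A)$; here $|\xi|$ is the length function of the metric and $\sigma_n(L_n)$ the leading symbol. As $L_n$ runs over $\maD_n=\operatorname{Diff}_{\maV}^{n}(M)$, the symbols $\sigma_n(L_n)$ run over all homogeneous degree-$n$ polynomials on $A^*$ with coefficients in $\CI(M)$. The resulting functions on $S^*A$ contain the constants (from $\maD_0=\CI(M)$), are closed under complex conjugation (the conjugate of $\sigma_n(L_n)$ is, up to sign, the leading symbol of the formal adjoint $L_n^*\in\maD_n$), and separate points of $S^*A$ (base points by $\CI(M)$, fibre directions by the linear symbols $\xi\mapsto\xi(X)$, $X\in\maV$). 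By Stone--Weierstrass they are dense in $C(S^*A)$, and since $\sigma_0$ is a $*$-homomorphism of $C^*$-algebras its range is closed; hence $\sigma_0(\fa(M_0))=C(S^*A)$.

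Step (A) is where the work lies. First, $\Lambda=(1+\Delta)^{-1/2}\in\fa(M_0)$: indeed $\Lambda^2=1-\Delta\Lambda^2\in\fa(M_0)$ since $\Delta\in\maD_2$, and $\Lambda=(\Lambda^2)^{1/2}$ by functional calculus in the unital $C^*$-algebra $\fa(M_0)$. Consequently $\fa(M_0)$ contains $f(\Delta)$ for every continuous $f$ — in particular all powers $\Lambda^{2k}=(1+\Delta)^{-k}$ and all heat operators $e^{-t\Delta}$ — as well as multiplication by every $\phi\in\CI(M)$ (as $\maD_0=\CI(M)$) and, being closed under products and adjoints, all operators $\phi\,\Lambda^{2k}D\,\psi$ with $\phi,\psi\in\CI(M)$ and $D\in\maD$. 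I would then show that $C^*(M)$ is \emph{generated}, as a $C^*$-algebra, by precisely these ``resolvent matrix coefficients'', which would give $C^*(M)=\overline{\Psi^{-1}(\GR_M)}\subseteq\fa(M_0)$ and, with (B) and the diagram chase of the first paragraph, complete the proof. The generation statement is the crux: it expresses that the $\maV$-elliptic generator $\Delta$ produces the full residual calculus, and I would prove it from the parametrix construction in the $\maV$-pseudodifferential calculus, in which $(1+\Delta)^{-1}$ is a parametrix for $1+\Delta$ modulo $\Psi^{-\infty}(\GR_M)$ and the multipliers $\CI(M)$ recover the smoothing remainders.

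The genuinely delicate point — and the one I expect to be the main obstacle — is that for a manifold with poly-cylindrical ends $C^*(M)$ is strictly larger than the compact operators already produced in Theorem~\ref{theorem.3}: it also contains the ``indicial'' families living near the faces of $M$, which are not reached by the compactly supported functional calculus $\phi\,e^{-t\Delta}\psi$ with $\phi,\psi\in\CIc(M_0)$. To capture these I would argue by induction on the depth of the corner structure, using the canonical composition series of $C^*(\GR_M)$ indexed by the open faces $F$ of $M$: each subquotient is Morita equivalent to $C^*(\GR_F)$ for a manifold with poly-cylindrical ends $F$ of smaller depth, and the indicial restriction carries the generators $\phi\,\Lambda^{2k}D\,\psi$ to the corresponding generators for $\GR_F$. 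The base case (the open interior) gives the compacts, handled in Theorem~\ref{theorem.3}; the inductive step shows that $\fa(M_0)$ surjects onto each subquotient while containing the preceding ideal, and a standard extension argument for filtered $C^*$-algebras then forces $\fa(M_0)\supseteq C^*(M)$, establishing (A).
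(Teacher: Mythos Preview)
Your proposal is correct and follows essentially the same route as the paper: reduce via the principal symbol to showing $\fa_{-1}(M_0)=C^*(M)$ (equivalently $C^*(M)\subseteq\fa(M_0)$), then run an induction along the composition series of $C^*(\GR_M)$ indexed by the faces of $M$, with the base case supplied by the compact operators produced in Theorem~\ref{theorem.3}. Your Stone--Weierstrass argument for $\sigma_0(\fa(M_0))=C(S^*A_M)$ fills in a step the paper merely asserts, and your final paragraph is exactly the paper's inductive reduction (the paper phrases the subquotients as $\fa_{-1}(X_0)\otimes C_0(X)$ for lower-dimensional $X_0$, you phrase them via Morita equivalence with $C^*(\GR_F)$); the intermediate ``parametrix'' paragraph in your Step~(A) is superfluous, since the induction already does the work.
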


\begin{proof} 
Let us recall that for manifolds with poly-cylindrical ends the vector
representation $\pi$ is injective on the norm closure
$\overline{\Psi^0(\GR_M)}$. We shall thus identify $\Psi^0(\GR_M)$ with
$\pi(\Psi^0(\GR_M))$. Since the principal symbol map acting on both
$\fa(M_0)$ and on $\overline{\Psi^0(\GR_M)}$ has the same range, namely
$C(S^*A_M)$, it is enough to show that $\fa_{-1}(M_0) =
\overline{\Psi^{-1}(\GR)} = C^*(M)$.

Let us notice that we can consider families, so proving $\fa_{-1}(M_0)
= C^*(M)$ is equivalent to proving $\fa_{-1}(M_0) \otimes C_0(X)=
C^*(M)\otimes C_0(X)$.  Moreover, the inclusion $\fa_{-1}(M_0) \subset
C^*(M)$ is compatible with the natural representations of $C^*(M)$
associated to the faces of $M$, as seen from their construction in
\cite{MelroseNistor}. It is enough then to prove that we have
isomorphisms on subquotients defined by these representations, which
are all of the form $\fa_{-1}(X_0) \otimes C_0(X)$, for some lower
dimensional manifolds. The proof proof finally reduces to show that
$\maK \subset \fa_{-1}(M_0)$, where $\maK$ is the algebra of compact
operators. For this we use Theorem~\ref{theorem.3}
\end{proof}

For manifolds with cylindrical ends (that is when $M$ has no corners
of codimension two or higher), this theorem was proved before in
\cite{CordesDoong}.

\section{The analytic index\label{a.index}}

\subsection{The adiabatic and tangent groupoids}
For the definition and study of the full $C^*$-analytic index, we
shall need the adiabatic and tangent groupoids associated to a
differentiable groupoid $\GR$. We now recall their definition.

Let $\GR$ be a Lie groupoid with space of units $M$. We construct both
the \textit{adiabatic groupoid } $\adi{\GR}$ and the \textit{tangent
  groupoid} $\tgt{\GR}$ \cite{ConnesNCG, Landsman, LMN1, bm-fp,
  Ramazan}. The space of units of $\adi{\GR}$ is $M \times [0,\infty)$
  and the tangent groupoid $\tgt{\GR}$ will be defined as the
  restriction of $\adi{\GR}$ to $M \times [0,1]$. The underlying set
  of the groupoid $\adi{\GR}$ is the disjoint union:
\begin{equation*}
        \adi{\GR} = A(\GR) \times \{ 0 \}\, \cup\,
        \GR \times (0,\infty).
\end{equation*}
We endow $A(\GR) \times \{ 0 \}$ with the structure of commutative
bundle of Lie groups induced by its vector bundle structure. We endow
$\GR \times (0,\infty)$ with the product groupoid structure. Then the
groupoid operations of $\adi{\GR}$ are such that $A(\GR) \times \{ 0
\}$ and $\GR \times (0,\infty)$ are subgroupoids with the induced
structure. Now let us endow $\adi{\GR}$ with a differentiable
structure.  To do so, it is enough to specify $A(\adi{\GR})$, since
its knowledge completely determines the differentiable structure of
$\adi{\GR}$ \cite{nistorINT}.  Then
\begin{equation}\label{eq.smooth.gad}
    \Gamma(A(\adi{\GR})) = t \Gamma( A(\GR \times [0, \infty))).
\end{equation}
More precisely, consider the product groupoid $\GR \times [0, \infty)$
  with pointwise operations. Then a section $X \in \Gamma(A(\GR \times
  [0, \infty)))$ can be identified with a smooth function $[0, \infty)
      \ni t \to X(t) \in \Gamma(A(\GR))$. We then require
      $\Gamma(A(\adi{\GR})) = \{tX(t)\}$, with $X \in \Gamma(A(\GR
      \times [0, \infty)))$.

It is easy to show that

\begin{lemma}\label{cor.prod}\
Let $\maH = \maG \times \RR^n$, as above. We have that $C^*(\maH_{ad})
\simeq C^*(\maG_{ad}) \otimes \CO(\RR^n)$ and that $C^*(\tgt{\maH})
\simeq C^*(\tgt{\maG}) \otimes \CO(\RR^n)$, the tensor product being
the (complete, maximal) $C^*$--tensor product.
\end{lemma}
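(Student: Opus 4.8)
The plan is to separate the topological/groupoid content from the $C^*$-algebraic content. I would first prove the Lie groupoid isomorphisms
\[
   \adi{(\maG\times\RR^n)} \cong \adi{\maG}\times\RR^n,
   \qquad
   \tgt{(\maG\times\RR^n)} \cong \tgt{\maG}\times\RR^n,
\]
where on the right the factor $\RR^n$ is the additive Lie group (a groupoid with a single unit) carrying \emph{no} adiabatic deformation, and then invoke the multiplicativity of the groupoid $C^*$-functor together with the Fourier isomorphism $C^*(\RR^n)\cong\CO(\RR^n)$. Since $\tgt{\maG}$ is by definition the restriction of $\adi{\maG}$ to $M\times[0,1]$ and the $\RR^n$ factor is inert in the deformation parameter, the tangent-groupoid statement follows from the adiabatic one by restriction; I would therefore concentrate on $\adi{(\maG\times\RR^n)}$.

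For the isomorphism, note that as a set
\[
   \adi{(\maG\times\RR^n)} = \bigl(A(\maG)\times\RR^n\bigr)\times\{0\}\ \cup\ \bigl(\maG\times\RR^n\bigr)\times(0,\infty),
\]
because $A(\maG\times\RR^n)=A(\maG)\times\RR^n$, and this matches set-theoretically with $\adi{\maG}\times\RR^n$ after reordering factors. The content is that the two smooth structures agree. I would define the comparison map $\Psi$ to be the identity on the $\maG$-variables but to rescale the $\RR^n$-variable by $t^{-1}$ for $t>0$, i.e.\ $\Psi(g,v,t)=((g,t),t^{-1}v)$ for $t>0$ and $\Psi(\xi,w,0)=((\xi,0),w)$. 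A direct check shows $\Psi$ is a groupoid morphism: at $t>0$ it intertwines the product $(g,v)(h,u)=(gh,v+u)$ with the product on $\adi{\maG}\times\RR^n$, and at $t=0$ it is the identity of the abelian group bundle $A(\maG)\times\RR^n$. Smoothness of $\Psi$ and of its inverse is where the rescaling is essential, and I would verify it at the level of Lie algebroids using \eqref{eq.smooth.gad}: that relation gives $\Gamma(A(\adi{(\maG\times\RR^n)}))=t\,\Gamma(A((\maG\times\RR^n)\times[0,\infty)))$, so the $\RR^n$-component of a generating section is $t\,v(t)$, whereas in $\adi{\maG}\times\RR^n$ the $\RR^n$-component is the unscaled $v(t)$; the substitution $v\mapsto t^{-1}v$ carries one onto the other and identifies the two algebroids, hence the two smooth structures.

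With the groupoid isomorphisms in hand, the $C^*$-statements follow from two standard facts. First, for any pair of Lie groupoids one has $C^*(\maG_1\times\maG_2)\cong C^*(\maG_1)\otimes C^*(\maG_2)$ in the maximal tensor product, since $\CIc(\maG_1\times\maG_2)$ is the algebraic tensor product $\CIc(\maG_1)\odot\CIc(\maG_2)$ as a $*$-algebra and the maximal completion of a tensor product of $*$-algebras is the maximal $C^*$-tensor product of the completions. Applying this with $\maG_1=\adi{\maG}$ (resp.\ $\tgt{\maG}$) and $\maG_2=\RR^n$ gives $C^*(\adi{(\maG\times\RR^n)})\cong C^*(\adi{\maG})\otimes C^*(\RR^n)$. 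Second, since $\RR^n$ is an abelian (hence amenable) Lie group, the Fourier transform gives $C^*(\RR^n)\cong\CO(\widehat{\RR^n})\cong\CO(\RR^n)$, and combining the two displays yields the claim. The main obstacle is the smoothness verification of the previous paragraph: one must reconcile the fact that the adiabatic construction a priori rescales the entire algebroid $A(\maG)\times\RR^n$, including the inert $\RR^n$-summand, with the product $\adi{\maG}\times\RR^n$ in which $\RR^n$ is undeformed. The rescaling map $\Psi$ is precisely what absorbs this discrepancy, and checking that it is a diffeomorphism across $t=0$ is the crux of the argument.
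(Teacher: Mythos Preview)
The paper does not actually prove this lemma; it is introduced by the phrase ``It is easy to show that'' and no argument is supplied. Your write-up fills in the omitted details and is correct. The route you take---first establishing the Lie-groupoid isomorphism $\adi{(\maG\times\RR^n)}\cong\adi{\maG}\times\RR^n$ via the rescaling diffeomorphism $\Psi$, then invoking multiplicativity of the full groupoid $C^*$-functor together with the Fourier isomorphism $C^*(\RR^n)\cong\CO(\RR^n)$---is the natural one and is presumably what the authors had in mind.

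One small comment. You read $\RR^n$ as the additive Lie group, which forces you to deal with the fact that the adiabatic construction a priori rescales the $\RR^n$-summand of the algebroid as well; your map $\Psi$ is exactly the diffeomorphism that undoes this, and your verification of smoothness across $t=0$ at the algebroid level via \eqref{eq.smooth.gad} is the crux and is correct. An alternative reading of $\maG\times\RR^n$ takes $\RR^n$ as the unit groupoid on the space $\RR^n$ (a family of copies of $\maG$ parametrised by $\RR^n$); in that case the $\RR^n$-direction contributes nothing to the algebroid, no rescaling is needed, and $C^*(\RR^n)=\CO(\RR^n)$ tautologically. Both readings yield the stated conclusion, and your argument handles the less trivial of the two.
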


\subsection{The full $C^*$-analytic index}
For each $t\in [0,1]$, $M \times \{t\}$ is a closed invariant subset
of $M \times [0, \infty)$ for the adiabatic and tangent groupoids, and
  hence we obtain an {\em evaluation morphism}
\begin{equation*}
    e_t : \cc{\tgt{\GR}} \to \cc{\tgt{\GR}_{M \times \{t\}}},
\end{equation*}
and, in particular, an exact sequence
\begin{equation}\label{eq.exact1}
    0 \to \cc{\tgt{\GR}_{M \times (0,1]}} \to \cc{\tgt{\GR}}
     \xrightarrow{e_0} C^*(A(\GR)) \to 0.
\end{equation}
Since $K_*( \cc{\tgt{\GR}_{M \times (0,1]}})=K_*(\cc{\GR} \otimes
      \CO((0,1])=0$, the evaluation map $e_0$ is induces an
    isomorphism in \kth.

The $C^*$-algebra $C^*(A(\GR))$ is commutative and we have
$C^*(A(\GR)) \simeq \CO(A^*(\GR))$. Therefore
$K_*(C^*(A(\GR)))=K^*(A^*(\GR))$. In turn, this isomorphism allows us
to define the \textit{full $C^*$-analytic index} $\ind_a$ as the
composition map
\begin{equation}\label{def.an.index1}
    \ind_a^\GR = e_1 \circ e_0^{-1} : K^*(A^*(\GR))
    \to K_*(\cc{\GR}),
\end{equation}
where $e_1 : \cc{\tgt{\GR}} \to \cc{\tgt{\GR}_{M \times \{1\}}} =
\cc{\GR}$ is defined by the restriction map to $M \times \{1\}$.  The
definition of the full $C^*$-analytic index gives the following.

\begin{proposition}\label{prop.comp}\
Let $\GR$ be a Lie groupoid with Lie algebroid $\pi : A(\GR) \to
M$. Also, let $N \subset F \subset M$ be a closed, invariant subset
which is an embedded submanifold of a face $F$ of $M$. Then the full
$C^*$-analytic index defines a morphism of the six-term exact
sequences associated to the pair $(A^*(\maG), \pi^{-1}(N))$ and to the
ideal $\cc{\maG_{N^c}} \subset \cc{\maG}$, $N^c := M \smallsetminus N$
\begin{equation*}
  \begin{CD}
    K^0(\pi^{-1}(N^c))  @>>> K^0(A^*(\maG))
    @>>> K^0(\pi^{-1}(N)) @>>> K^1(\pi^{-1}(N^c))\\
    @VVV @VVV @VVV @VVV\\
    K^0(\cc{\maG_{N^c}})  @>>> K^0(\cc{\maG})
    @>>> K^0(\cc{\maG_{N}}) @>>> K^1(\cc{\maG_{N^c}})\\
  \end{CD}
\end{equation*}
\end{proposition}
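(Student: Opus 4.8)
The plan is to deduce the statement from the naturality of the tangent groupoid with respect to restriction to invariant subsets, together with the functoriality of the six-term exact sequence in $K$-theory.

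Since $N \subset M$ is closed and invariant, $\pi^{-1}(N)$ is a closed subset of $A^*(\GR)$ with open complement $\pi^{-1}(N^c)$, and the restriction of the Lie algebroid gives $A(\GR_N) = A(\GR)|_N$, so that $\cc{A(\GR_N)} \simeq \CO(\pi^{-1}(N))$ and $\cc{A(\GR_{N^c})} \simeq \CO(\pi^{-1}(N^c))$. Thus the top six-term sequence is the one in $K$-theory associated to the exact sequence of commutative algebras
\begin{equation*}
0 \to \CO(\pi^{-1}(N^c)) \to \CO(A^*(\GR)) \to \CO(\pi^{-1}(N)) \to 0,
\end{equation*}
while the bottom one is associated to the ideal exact sequence
\begin{equation*}
0 \to \cc{\GR_{N^c}} \to \cc{\GR} \to \cc{\GR_N} \to 0 .
\end{equation*}

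Next I would produce an interpolating exact sequence at the level of the tangent groupoid. Because $N \times [0,1]$ is a closed invariant subset of the unit space of $\tgt{\GR}$ with complement $N^c \times [0,1]$, and because the tangent groupoid construction is compatible with restriction to invariant subsets, namely $\tgt{\GR}_{N \times [0,1]} \simeq \tgt{(\GR_N)}$ and likewise for $N^c$, we obtain an exact sequence
\begin{equation*}
0 \to \cc{\tgt{(\GR_{N^c})}} \to \cc{\tgt{\GR}} \to \cc{\tgt{(\GR_N)}} \to 0 .
\end{equation*}
The evaluation maps $e_0$ and $e_1$ are restrictions to the invariant subsets $M \times \{0\}$ and $M \times \{1\}$, which commute with restriction to $N$ and to $N^c$; hence each of $e_0, e_1$ defines a morphism from this tangent-groupoid sequence to, respectively, the two displayed sequences. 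Concretely, for $e_0$ one checks that the square
\begin{equation*}
\begin{CD}
\cc{\tgt{\GR}} @>{e_0}>> \cc{A(\GR)} \\
@VVV @VVV \\
\cc{\tgt{(\GR_N)}} @>{e_0}>> \cc{A(\GR_N)}
\end{CD}
\end{equation*}
commutes, and similarly for the ideal arrow and for $e_1$.

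Applying the $K$-theory functor and invoking the naturality of the six-term exact sequence with respect to morphisms of short exact sequences, both $e_0$ and $e_1$ induce morphisms of six-term sequences. By the argument following \eqref{eq.exact1}, applied now to each of $\GR_{N^c}$, $\GR$, and $\GR_N$ (the kernels all having the form $\cc{\GR'} \otimes \CO((0,1])$ for the relevant groupoid $\GR'$, and hence being $K$-trivial), the morphism induced by $e_0$ is an isomorphism on all three terms. I may therefore invert it and compose with the morphism induced by $e_1$; by \eqref{def.an.index1} the composite $e_1 \circ e_0^{-1}$ is exactly $\ind_a$ on each of the three algebras. The resulting three vertical index maps then commute with all horizontal maps and, by naturality of the connecting homomorphism, with the boundary maps as well, which is precisely the asserted morphism of six-term exact sequences.

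The main obstacle is the justification of the interpolating sequence: that the tangent (adiabatic) groupoid is compatible with restriction to a closed invariant submanifold $N$ of a face, i.e. that $\tgt{\GR}_{N \times [0,1]} \simeq \tgt{(\GR_N)}$ as Lie groupoids (and the analogous statement for the open complement $N^c$), and that the induced sequence of groupoid $C^*$-algebras is exact with the evaluation maps intertwining the restriction maps on the nose. This rests on the behaviour of the smooth structure \eqref{eq.smooth.gad} under restriction and on the technical results on tangent groupoids developed in Section~\ref{a.index}; once these are in place, the remaining $K$-theory argument is a routine diagram chase.
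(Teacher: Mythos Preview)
Your proof is correct and follows essentially the same approach as the paper: identify the topological pair sequence with the $C^*$-algebra exact sequence for the commutative algebras $\CO(\pi^{-1}(N^c)) \subset \CO(A^*(\GR))$, and then invoke naturality of the six-term exact sequence together with the definition of $\ind_a$ via the tangent groupoid. The paper's proof is a two-line sketch asserting exactly this, while you spell out the content of that naturality claim --- namely the interpolating exact sequence at the level of $\tgt{\GR}$ and the fact that $e_0$, $e_1$ are morphisms of short exact sequences --- which is the right way to unpack it.
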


\begin{proof}\
The six-term, periodic long exact sequence in \kth\ associated to the
pair $(A^*(\maG), \pi^{-1}(N))$ is naturally isomorphic to the
six-term exact sequence in \kth\ associated to the pair $\CO(A^*_{M
  \smallsetminus N}) \subset \CO(A^*(\maG))$. The result follows from
the naturality of the six-term exact sequence in \kth\ and the
definition of the full $C^*$-analytic index~\eqref{def.an.indexM}.
\end{proof}

Recall that for $M$ a smooth manifold with corners with embedded
faces, we have denoted $A(\GR_M)=A_M$ and $\cc M := C^*(\GR_M)$. Then
the full $C^*$-analytic index becomes the desired map
\begin{equation}\label{def.an.indexM}
    \ind_a^M : K^*(A^*_M) \to K_*(\cc{M}).
\end{equation}
See \cite{Debord2, Debord1} for more properties of the analytic index.

\begin{remark}\ Assume $M$ has no corners (or boundary).
Then $\GR_M = M \times M$ is the product groupoid and hence
$\Psi^\infty(\GR_M) = \Psi^\infty(M)$. In particular, $C^*(M) :=
C^*(\GR_M) \simeq \maK$, the algebra of compact operators on $M$.  In
this case $K_0(C^*(M))=\ZZ$, and $\ind_a$ is precisely the analytic
index as introduced by \cite{AS1}. This construction holds also for
the case when $M$ is not compact, if one uses pseudodifferential
operators of order zero that are ``multiplication at infinity,'' as in
\cite{carvalho}.
\end{remark}

\section{Properties of the full $C^*$-analytic
index\label{sec.prop}}

The following proposition is an important step in the proof of our
index theorem, Theorem \ref{thm.topological}.

\begin{proposition}\label{prop.2}\
Let $X$ be a \mec\ such that each open face of $X$ is diffeomorphic to
a Euclidean space. Then the full $C^*$-analytic index
\begin{equation*}
    \ind_a^X : K^*(A^*_X) \to K_*(\cc{X}),
\end{equation*} 
defined in Equation \eqref{def.an.indexM}, is an isomorphism.
\end{proposition}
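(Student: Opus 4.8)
The plan is to prove this by induction on the dimension (or equivalently, on the maximal codimension of the faces) of $X$, using the morphism of six-term exact sequences from Proposition \ref{prop.comp} together with the assumption that every open face is a Euclidean space. The base case is the situation described in the Remark above: when $X$ has no corners, $X$ is itself an open face, hence diffeomorphic to $\RR^n$, so $\GR_X = X\times X$, $\cc{X}\simeq\maK$, and $\ind_a^X$ reduces to the classical analytic index $K^*(A_X^*)=K^*(T^*X)\to K_0(\maK)=\ZZ$, which is an isomorphism by Bott periodicity (the index of the de Rham or Euclidean Dirac class generates). So the statement holds when $X$ has no boundary.

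For the inductive step I would choose a maximal (closed) face $N$ of $X$, which is a manifold with embedded faces of strictly smaller dimension and whose own open faces are again Euclidean, so the inductive hypothesis applies to $N$. Take $N$ as the closed invariant submanifold in Proposition \ref{prop.comp}, so that $N^c=X\smallsetminus N$ is the interior together with the remaining faces; the restriction $\GR_{X,N}$ is $\GR_N$ up to the appropriate normal-bundle factor, and $\GR_{X,N^c}$ is a product of a lower-dimensional classifying situation with a half-line or Euclidean factor. Proposition \ref{prop.comp} then gives a ladder between the six-term sequence of the pair $(A_X^*,\pi^{-1}(N))$ on the topological side and the six-term sequence of the ideal $\cc{\GR_{X,N^c}}\subset\cc{X}$ on the operator-algebra side, with the three relevant vertical maps being $\ind_a$ for $X$, for $N$, and for $N^c$. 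Using Lemma \ref{cor.prod} to strip off the $\CO(\RR^n)$ or $\CO((0,1])$ tensor factors (which only shift the $K$-groups via the known Thom/suspension isomorphisms, compatibly with the index on both sides), the index maps for $N$ and for the $N^c$-piece are isomorphisms by the inductive hypothesis.

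With two out of three vertical arrows in the ladder being isomorphisms, the Five Lemma forces the middle arrow $\ind_a^X$ to be an isomorphism as well, completing the induction. The main point to get right—and the step I expect to be the real obstacle—is the bookkeeping that identifies the restricted groupoids $\GR_{X,N}$ and $\GR_{X,N^c}$ with genuine products of lower-dimensional classifying data times Euclidean or half-line factors, and checks that under these identifications the analytic index commutes with the suspension/Thom isomorphisms on the $K$-theory level. This is exactly where the hypothesis ``each open face is diffeomorphic to a Euclidean space'' is used: it guarantees both that the base case is the algebra of compact operators and that at each stage the face strata contribute only contractible (Euclidean) factors, so that no nontrivial topology of the faces obstructs the isomorphism. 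Verifying the compatibility of $\ind_a$ with these structural isomorphisms (rather than merely its existence) is the technical heart of the argument, and is where I would expect to invoke the naturality properties of the tangent groupoid construction most carefully.
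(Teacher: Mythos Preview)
Your overall strategy---induction via Proposition \ref{prop.comp}, the six-term exact sequences, and the Five Lemma---is exactly the paper's. The gap is in the inductive parameter and the resulting description of $N^c$.

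You induct on dimension and take $N$ to be a maximal (codimension-one) closed face. Then $N$ is indeed lower-dimensional with Euclidean open faces, and $\GR_{X,N}\cong\GR_N\times\RR$ is handled by the inductive hypothesis together with Lemma \ref{cor.prod}. But your claim that $\GR_{X,N^c}$ is ``a product of a lower-dimensional classifying situation with a half-line or Euclidean factor'' is false in general: $N^c=X\smallsetminus N$ has the \emph{same} dimension as $X$ and is typically not a product (remove one closed edge from a hexagon, for instance). So an induction on dimension alone does not close, and the sentence you flag as ``the real obstacle'' is in fact the place where the argument breaks rather than merely gets technical.

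The paper inducts on the \emph{number of faces} of $X$. With that parameter the scheme runs cleanly: take $N$ to be a minimal closed face, so that $N$ has no corners and is itself a single Euclidean space; then $\GR_{X,N}\cong(N\times N)\times\RR^k$ is covered by the base case plus Lemma \ref{cor.prod}, while $N^c$ is again a manifold with embedded, Euclidean open faces but strictly fewer of them, so the inductive hypothesis applies to $N^c$ directly---no product decomposition is needed. (Alternatively, you may keep your codimension-one $N$ but switch the inductive parameter to the number of faces, since both $N$ and $N^c$ then have fewer faces than $X$.) The remainder of your outline---the normal $\RR^k$ factor on restricted groupoids, the use of Lemma \ref{cor.prod}, and the Five Lemma conclusion---is correct.
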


\begin{proof}\
The proof is by induction on the number of faces of $X$ using
Proposition \ref{prop.comp}, the six-term exact sequence in
$K$-theory, and the Five Lemma in homological algebra.
\end{proof}

\begin{remark}
The above proposition can be regarded as a Baum--Connes isomorphism
for manifolds with corners.
\end{remark}

\begin{proposition}\label{prop.3}
Let $\iota: M \to X$ be a closed embedding of manifold with
corners. Assume that, for each open face $F$ of $X$, the intersection
$F \cap M$ is a non-empty open face of $M$ and that every open face of
$M$ is obtained in this way. Then $\kc{M} \to \kc{X}$ is an
isomorphism denoted $\iota_*$.
\end{proposition}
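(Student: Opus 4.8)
The plan is to identify $\GR_M$ with the reduction of $\GR_X$ to the submanifold $M$ and then deduce the statement from the equivalence theorem for groupoid $C^*$-algebras, so that $\iota_*$ is precisely the $K$-theory isomorphism induced by a Morita equivalence. Recall that $\cc{M}=C^*(\GR_M)$ and $\cc{X}=C^*(\GR_X)$, where $\GR_M,\GR_X$ are the source-simply-connected Lie groupoids integrating $A_M,A_X$. The orbits of $\GR_M$ (resp.\ $\GR_X$) are exactly the open faces of $M$ (resp.\ $X$), and the isotropy over a point of an open face of codimension $k$ is $\RR^k$. The hypothesis gives a codimension-preserving bijection $F\mapsto F\cap M$ between the open faces of $X$ and those of $M$; in groupoid language this says exactly that $M$ meets every orbit of $\GR_X$ and that corresponding isotropy groups coincide.

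First I would verify that the reduction $\GR_X|_M^M:=\{g\in\GR_X:\ s(g),r(g)\in M\}$ is canonically isomorphic, as a Lie groupoid, to $\GR_M$. At the level of Lie algebroids this is the statement that the sub-bundle of $A_X|_M$ consisting of directions tangent to $M$ equals $A_M$: a vector field on $X$ tangent to all faces of $X$ restricts on $M$ to one tangent to all faces $F\cap M$ of $M$, so its tangent-to-$M$ component is a section of $A_M$, and conversely every section of $A_M$ arises this way. Thus $A_X|_M$ sits in a short exact sequence $0\to A_M\to A_X|_M\to N\to 0$ with $N$ the normal bundle of $M$ in $X$, and the reduction has Lie algebroid $A_M$; the face correspondence makes the induced groupoid map fibrewise surjective and compatible with the structure, giving the isomorphism.

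Next I would check the two geometric conditions for a groupoid equivalence. Meeting every orbit is the hypothesis that $F\cap M\neq\emptyset$ for each open face $F$ of $X$. Transversality, $T_pX=T_p(\GR_X\cdot p)+T_pM$ for $p\in M$, follows from the codimension bookkeeping, since $\dim F-\dim(F\cap M)=\dim X-\dim M$ is constant across faces. With these in hand the linking space $P:=s_X^{-1}(M)=\{g\in\GR_X:\ s(g)\in M\}$ carries a free and proper left $\GR_X$-action with moment map $r$ and a free and proper right $\GR_M=\GR_X|_M^M$-action with moment map $s$, and $P/\GR_M\cong X$, $\GR_X\backslash P\cong M$. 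Hence $P$ is a $(\GR_X,\GR_M)$-equivalence, and the Muhly--Renault--Williams equivalence theorem yields a Morita equivalence $\cc{M}\sim\cc{X}$; the induced map in $K$-theory is the asserted isomorphism $\iota_*:\kc{M}\to\kc{X}$.

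The main obstacle is the verification, at the corner strata, that the two actions on $P$ are free and proper and that $r:P\to X$ is a surjective submersion of manifolds with corners: the isotropy groups are the \emph{non-compact} groups $\RR^k$, so properness must be read off the explicit transversal rather than inferred from compactness, and one must control the (possibly non-Hausdorff) smooth structure of $\GR_X$ near the faces. Should the equivalence be awkward to set up uniformly, an alternative is to argue by induction on the number of faces exactly as in the proof of Proposition~\ref{prop.2}: filter both $\cc{M}$ and $\cc{X}$ by the ideals attached to the open dense stratum and its complement, use the resulting morphism of six-term exact sequences together with the Five Lemma, and reduce to the subquotients attached to individual faces, where the correspondence makes the relevant pieces elementary Morita equivalences; the innermost case is the interior, where both algebras are $\maK$ and the claim is immediate.
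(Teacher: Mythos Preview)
Your proposal is correct and follows essentially the same approach as the paper: both prove the result by exhibiting a groupoid equivalence between $\GR_M$ and $\GR_X$ via the linking space $P=s^{-1}(M)$ (the paper writes $\Omega=r^{-1}(M)$, which is the same up to the groupoid inverse) and invoking the Muhly--Renault--Williams theorem. Your write-up supplies more of the verification (the identification $\GR_X|_M^M\cong\GR_M$, the transversality and orbit-meeting checks) than the paper, which simply asserts that $\Omega$ establishes the equivalence.
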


\begin{proof} Recall from \cite{renault-equiv}
that two locally compact groupoids $G$ and $H$ are {\em equivalent}
provided there exists a topological space $\Omega$ and two continuous,
surjective open maps $r : \Omega \to \g0$ and $d : \Omega \to \h0$
together with a left (respectively right) action of $G$ (respectively
$H$) on $\Omega$ with respect to $r$ (respectively $d$), such that $r$
(respectively $d$) is a principal fibration of structural \gr\ $H$
(respectively $G$). An important theorem of Muhly--Renault--Williams
states that if $G$ and $H$ are equivalent, then $\kc {G} \simeq \kc
{H}$ \cite{renault-equiv}.

Our result then follows from the fact that $\Omega := r^{-1}(M)$
establishes the desired equivalence between $\GR_M$ and $\GR_X$.
\end{proof}

We can now prove a part of our principal symbol topological index
theorem, Theorem \ref{thm.topological}, involving an embedding $\iota
: M \to X$ of our manifold with corners $M$ into another manifold with
corners $X$. This theorem amounts to the fact that the diagram
\eqref{diag.I} is commutative. In order to prove this, we shall first
consider a tubular neighborhood
\begin{equation}\label{eq.def.jk}
    M \overset{k}{\hookrightarrow} U \overset{j}{\hookrightarrow} X
\end{equation}
of $M$ in $X$, so that $\iota = j \circ k$. The diagram (\ref{diag.I})
is then decomposed into the two diagrams below, and hence the proof of
the commutativity of the diagram (\ref{diag.I}) reduces to the proof
of the commutativity of the two diagrams below, whose morphisms are
defined as follows:\ the morphism $k_*$ is defined by Proposition
\ref{prop.3} and the morphis $j_*$ is defined by the inclusion of
algebras. The morphism $\iota_*$ is defined by $\iota_* = j_* \circ
k_*$. Finally, the morphism $k_!$ is the push-forward morphism.

Let us now turn our attention to the following diagram:
\begin{equation}\label{diag2}
\begin{CD}
    \kc{M} @>{k_*}>> \kc{U} @>{j_*}>>\kc{X}\\
    @A{\ind_a^M}AA @A{\ind_a^U}AA @AA{\ind_a^X}A\\ K^*(A^*_M)
    @>{k_{!}}>> K^*(A^*_U) @>{j_!}>> K^*(A^*_X).\\
\end{CD}
\end{equation}

The commutativity of the left diagram is part of the following
proposition, which is the most technical part of the proof. Its proof
is obtained by integrating a Lie algebroid obtained as a double
deformation of a tangent space.

\begin{proposition}\label{prop.diag1}
Let $\pi : U \to M$ be a vector bundle over a manifold with corners
$M$ and let $k : M \to U$ be the ``zero section'' embedding. Then the
following diagram commutes:
\begin{equation}\label{diag3}
\begin{CD}
    \kc{M} @>{k_*}>{\simeq}> \kc{U}\\
    @A{\ind_a^M}AA @A{\ind_a^U}AA\\
    K^*(A^*_M) @>{\simeq}>{k_{!}}> K^*(A^*_U)
\end{CD}
\end{equation}
\end{proposition}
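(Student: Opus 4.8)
The plan is to realize the commutativity of diagram \eqref{diag3} by constructing a single groupoid that simultaneously deforms the analytic index on $M$ into the analytic index on $U$, thereby exhibiting $\ind_a^U \circ k_! = k_* \circ \ind_a^M$ as two restrictions of one map defined on a larger tangent/adiabatic object. The key observation is that the zero-section embedding $k : M \to U$ of a vector bundle is the simplest possible tubular neighborhood, so the push-forward $k_!$ is itself a Thom/deformation isomorphism, and the Morita equivalence $k_*$ of Proposition \ref{prop.3} (which applies since each open face of $U$ meets $M$ in exactly one open face, the fibers being Euclidean) is already known to be an isomorphism. Thus both horizontal maps are isomorphisms, and only the commutativity needs proof.

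\textbf{Construction.} First I would form the groupoid $\GR_U$ integrating $A_U$ over $U$, and compare it with $\GR_M$. Because $U \to M$ is a vector bundle, $A_U$ is (up to the appropriate identification) the pullback of $A_M$ together with the vertical tangent directions of the fibers, so $\GR_U$ looks like $\GR_M$ deformed by an $\R^n$-factor; Lemma \ref{cor.prod} governs precisely this product behavior at the level of $C^*$-algebras and their adiabatic deformations. Next, following the hint that the proof integrates a Lie algebroid arising as a \emph{double} deformation, I would build the adiabatic (tangent) groupoid of this family in \emph{two} deformation parameters: one parameter $t$ implements the usual passage $A^*(\GR) \rightsquigarrow \cc{\GR}$ defining $\ind_a$, while the second parameter $s$ implements the deformation retracting $U$ onto its zero section $M$ (scaling the fiber coordinate). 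The Lie algebroid of this two-parameter groupoid is engineered, via the analogue of \eqref{eq.smooth.gad}, so that its restrictions to the four corners of the $(s,t)$-square recover the four corners of diagram \eqref{diag3}.

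\textbf{Assembling the square.} With this double-deformation groupoid $\maH$ in hand, the commutativity becomes a naturality statement: the evaluation morphisms $e_0, e_1$ (in $t$) and the analogous restriction morphisms (in $s$) all fit into one commuting diagram of $C^*$-algebra maps, and applying $K$-theory gives the commuting square after inverting the evaluation maps that are known to be $K$-isomorphisms (as in the discussion preceding \eqref{def.an.index1}, where $K_*$ of the ideal supported on $(0,1]$ vanishes). Concretely, I would identify $\ind_a^M$ and $\ind_a^U$ as the $t$-restrictions, and $k_!$ and $k_*$ as the $s$-restrictions, of the single family over the $(s,t)$-square, so that commutativity of \eqref{diag3} is the equality of the two composite restrictions to the opposite corner.

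\textbf{Main obstacle.} The hard part will be the explicit construction and smoothness of the double-deformation Lie algebroid: one must specify the generators of $\Gamma(A(\maH))$ (by a formula of the shape $st\,X(s,t)$, mixing both deformation parameters as in \eqref{eq.smooth.gad}) and verify that the resulting $A(\maH)$ integrates to an honest Lie groupoid whose corner-restrictions are \emph{exactly} the four groupoids in the square, with the induced $C^*$-algebra maps matching $k_!$, $k_*$, $\ind_a^M$, $\ind_a^U$ on the nose rather than merely up to a homotopy one would then have to track. Once the groupoid and its restrictions are correctly identified, the $K$-theoretic commutativity is formal from functoriality and the vanishing-$K$-theory arguments already established; but getting the geometry of the tangent-of-tangent deformation right, compatibly with the manifold-with-corners structure and the tangency-to-faces condition defining $\maV_M$, is where essentially all the work lies.
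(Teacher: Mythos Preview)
Your proposal is correct and follows essentially the same approach as the paper, which states only that ``[i]ts proof is obtained by integrating a Lie algebroid obtained as a double deformation of a tangent space'' and defers details to \cite{MN}. You have correctly unpacked that hint into the two-parameter adiabatic construction, identified Lemma~\ref{cor.prod} as the relevant product lemma, and isolated the genuine difficulty (building the smooth double-deformation groupoid with the right corner restrictions) exactly where it lies.
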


The commutativity of the second square in the Diagram \ref{diag2}
follows from the naturality of the tangent groupoid construction.

\begin{proposition}\label{prop.diag2}\
Let $j : U \to X$ be the inclusion of the open subset $U$. Then the
diagram below commutes:
\begin{equation*}
\begin{CD}
    \kc{U} @>{j_*}>>\kc{X}\\
    @A{\ind_a^U}AA @AA{\ind_a^X}A\\
    K^*(A^*_U) @>{j_*}>\simeq> K^*(A^*_X).
\end{CD}
\end{equation*}
\end{proposition}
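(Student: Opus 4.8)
The plan is to deduce the commutativity from the functoriality of the entire adiabatic deformation with respect to restriction to an open set of units. Since $U$ is open in $X$, the set $\GR_U := \{\gamma \in \GR_X : s(\gamma),\, r(\gamma) \in U\}$ is an open subgroupoid of $\GR_X$ with $A(\GR_U) = A_X|_U = A_U$, and extension by zero of compactly supported functions yields a $*$-homomorphism $\cc{\GR_U} \to \cc{\GR_X}$; this is precisely the inclusion of algebras defining the top map $j_*$ in $K$-theory. The same construction at the level of Lie algebroids gives the extension-by-zero inclusion $C^*(A(\GR_U)) \simeq \CO(A^*_U) \to \CO(A^*_X) \simeq C^*(A(\GR_X))$ inducing the bottom map $j_*$ on $K^*$.

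First I would establish the key naturality statement: the tangent groupoid commutes with open restriction of units, that is, $\tgt{\GR_U} = (\tgt{\GR_X})|_{U \times [0,1]}$ as Lie groupoids. Granting this, there is an extension-by-zero inclusion $\beta : \cc{\tgt{\GR_U}} \to \cc{\tgt{\GR_X}}$ fitting into the commuting ladder
\[
\begin{CD}
\CO(A^*_U) @<{e_0}<< \cc{\tgt{\GR_U}} @>{e_1}>> \cc{\GR_U}\\
@VVV @V{\beta}VV @VVV\\
\CO(A^*_X) @<{e_0}<< \cc{\tgt{\GR_X}} @>{e_1}>> \cc{\GR_X}
\end{CD}
\]
in which all vertical arrows are the open-restriction inclusions. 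Each square commutes because the evaluation morphisms $e_0$ and $e_1$ are themselves restrictions of functions to the invariant subsets $X \times \{0\}$ and $X \times \{1\}$, which manifestly commute with extension by zero along $U \subset X$.

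Then I would pass to $K$-theory. Recall from \eqref{eq.exact1} that each $e_0$ induces an isomorphism, so $\ind_a = (e_1)_* \circ (e_0)_*^{-1}$ on both rows. Commutativity of the left square gives $\beta_* \circ (e_0^U)_*^{-1} = (e_0^X)_*^{-1} \circ j_*$, while commutativity of the right square gives $j_* \circ (e_1^U)_* = (e_1^X)_* \circ \beta_*$. Composing these,
\[
j_* \circ \ind_a^U = j_* \circ (e_1^U)_* \circ (e_0^U)_*^{-1} = (e_1^X)_* \circ \beta_* \circ (e_0^U)_*^{-1} = (e_1^X)_* \circ (e_0^X)_*^{-1} \circ j_* = \ind_a^X \circ j_*,
\]
which is the asserted commutativity.

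The main obstacle is the naturality identity $\tgt{\GR_U} = (\tgt{\GR_X})|_{U \times [0,1]}$, and specifically the matching of smooth structures. The differentiable structure of a tangent groupoid is determined by its algebroid through \eqref{eq.smooth.gad}, so one must verify that restricting the rescaled sections $t\,\Gamma(A(\GR_X \times [0,\infty)))$ to $U \times [0,\infty)$ reproduces $t\,\Gamma(A(\GR_U \times [0,\infty)))$. This is immediate from $A(\GR_U) = A_X|_U$ together with the locality of the rescaling $X \mapsto tX(t)$, but it is the one step that genuinely uses that $U$ is open. The assertion that the bottom $j_*$ is an isomorphism is a separate, purely topological point: $A^*_U$ is an open subset of $A^*_X$ whose complement $A^*_X|_{X \smallsetminus U}$ contributes nothing in compactly supported $K$-theory because $U$ is a tubular neighbourhood of $M$ meeting every face, so extension by zero is an isomorphism on $K^*$; alternatively this may be read off from the surrounding discussion.
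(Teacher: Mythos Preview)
Your proposal is correct and follows exactly the approach the paper indicates: the paper's own justification is the single sentence preceding the proposition, ``The commutativity of the second square in the Diagram \ref{diag2} follows from the naturality of the tangent groupoid construction,'' and you have simply unpacked what this naturality means---the identification $\tgt{\GR_U} = (\tgt{\GR_X})|_{U\times[0,1]}$, the resulting ladder of evaluation maps, and the passage to $K$-theory using that $(e_0)_*$ is invertible. Your remark that the bottom $j_*$ being an isomorphism is a separate topological point, not part of the commutativity argument, is also well taken.
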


As explained above, the previous two propositions give

\begin{theorem}\label{commutativity}
Let $M \overset{\iota}{\to} X$ be a closed embedding of manifolds with
corners. Then the diagram
\begin{equation}\label{diag4}
\begin{CD}
    \kc{M)} @>{\iota_*}>> \kc{X}\\ @AA{\ind_a^M}A
    @A{\ind_a^X}AA\\ K^*(A^*_M) @>\iota_{!}>> K^*(A^*_X) \\
\end{CD}
\end{equation}
is commutative.
\end{theorem}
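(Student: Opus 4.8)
The plan is to reduce Theorem~\ref{commutativity} to the two propositions that precede it, exactly as the surrounding text advertises. The first step is to factor the closed embedding $\iota : M \to X$ through a tubular neighborhood, writing $\iota = j \circ k$ as in \eqref{eq.def.jk}, where $k : M \hookrightarrow U$ is the zero-section embedding of $M$ into a vector bundle $\pi : U \to M$ realized as a tubular neighborhood, and $j : U \hookrightarrow X$ is the inclusion of that open neighborhood. Correspondingly, I would factor the $K$-theory maps as $\iota_* = j_* \circ k_*$ (with $k_*$ the Morita-equivalence isomorphism of Proposition~\ref{prop.3} and $j_*$ the map induced by inclusion of algebras) and $\iota_! = j_! \circ k_!$ (with $k_!$ the push-forward along the zero section). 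This splits the single square \eqref{diag4} into the two squares of the composite Diagram~\eqref{diag2}.

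The second step is simply to invoke the two propositions on these squares. The left square, relating $\ind_a^M$ and $\ind_a^U$ via $k_*$ and $k_!$, is precisely the content of Proposition~\ref{prop.diag1}; the right square, relating $\ind_a^U$ and $\ind_a^X$ via $j_*$ and $j_!$, is precisely Proposition~\ref{prop.diag2}. Since the outer rectangle of \eqref{diag2} is obtained by horizontally concatenating these two commuting squares, and horizontal pasting of commutative squares yields a commutative rectangle, the outer square \eqref{diag4} commutes. Explicitly, I would check $\ind_a^X \circ \iota_! = \ind_a^X \circ j_! \circ k_! = j_* \circ \ind_a^U \circ k_! = j_* \circ k_* \circ \ind_a^M = \iota_* \circ \ind_a^M$, where the middle two equalities are exactly Propositions~\ref{prop.diag2} and \ref{prop.diag1}.

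The real work of the theorem is therefore entirely concentrated in the two propositions, which I am permitted to assume. The main obstacle, were one to prove everything from scratch, is Proposition~\ref{prop.diag1}: as the text notes, its proof requires integrating a Lie algebroid built as a \emph{double} deformation of a tangent space, simultaneously deforming the groupoid to its adiabatic (tangent) groupoid and deforming the base $M$ into the normal bundle $U$. Verifying that this double deformation groupoid is smooth, integrates the expected algebroid, and induces on $K$-theory exactly the Thom-isomorphism-type push-forward $k_!$ compatibly with the two analytic indices is the delicate step. By contrast, Proposition~\ref{prop.diag2} follows from naturality of the tangent groupoid construction under the open inclusion $j$, and the factorization itself is routine. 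Thus my proof of the theorem proper is short: factor $\iota$ through the tubular neighborhood, paste the two commutative squares, and read off commutativity of the outer rectangle.
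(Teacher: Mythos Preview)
Your proposal is correct and matches the paper's own argument essentially verbatim: the paper explicitly factors $\iota = j \circ k$ through a tubular neighborhood as in \eqref{eq.def.jk}, splits Diagram~\eqref{diag4} into the two squares of Diagram~\eqref{diag2}, and then states that Propositions~\ref{prop.diag1} and~\ref{prop.diag2} give the theorem. Your chain of equalities is precisely the horizontal pasting the paper has in mind.
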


\section{A topological index theorem}

Motivated by Theorem \ref{commutativity} and by the results of Section
\ref{sec.prop} (see Propositions \ref{prop.2} and \ref{prop.3}) we
introduce the following definition.

\begin{definition}\label{def.class}\
A {\em strong classifying manifold} $X_M$ of $M$ is a compact manifold with corners
$X_M$, together with a closed embedding $\iota: M \to X_M$ with the
following properties:
\begin{enumerate}[(i)]
\item\ each open face of $X_M$ is diffeomorphic to a Euclidean space,
\item\ $F \to F \cap M$ induces a bijection between the open faces of
  $X_M$ and ~$M$.
\end{enumerate}
\end{definition}

Note that if $M \subset X_M$ are as in the above definition, then each
face of $M$ is the transverse intersection of $M$ with a face of
$X_M$.

\begin{proposition}
Let $M$ be a \mec\ with embedded faces, and $\iota : M \hookrightarrow
X_M$ be a strong classifying space of $M$. Then the maps $\iota_*$ and
$\ind_a^X$ of Theorem \ref{commutativity} are isomorphisms. That is, a
strong classifying space for $M$ is a classifying space for $M$.
\end{proposition}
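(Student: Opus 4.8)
The plan is to deduce both isomorphism claims directly from Propositions \ref{prop.2} and \ref{prop.3}, the only real content being to check that the two conditions packaged into Definition \ref{def.class} are exactly their hypotheses. First I would dispose of $\ind_a^X$. By assumption $X_M$ is a compact manifold with corners with embedded faces, and condition (i) of Definition \ref{def.class} says that each of its open faces is diffeomorphic to a Euclidean space. These are precisely the hypotheses of Proposition \ref{prop.2} applied with $X = X_M$, so that proposition yields at once that $\ind_a^X : K^*(A^*_X) \to K_*(C^*(X_M))$ is an isomorphism.

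Next I would treat $\iota_*$. Here I would verify that condition (ii) of Definition \ref{def.class}, namely that $F \mapsto F \cap M$ induces a bijection between the open faces of $X_M$ and those of $M$, together with the fact that $\iota$ is a closed embedding, supplies exactly the hypotheses of Proposition \ref{prop.3}. Indeed, for the map $F \mapsto F \cap M$ to be a well-defined map into the open faces of $M$ it must carry each open face $F$ of $X_M$ to a non-empty open face $F \cap M$ of $M$, and surjectivity of the bijection is precisely the statement that every open face of $M$ arises as such an intersection. Feeding these into Proposition \ref{prop.3} gives that $\iota_* : K_*(C^*(M)) \to K_*(C^*(X_M))$ is an isomorphism.

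With both $\iota_*$ and $\ind_a^X$ shown to be isomorphisms, I would conclude by invoking the definition of a classifying space for $M$, which is exactly the requirement that these two morphisms be isomorphisms; this proves the final assertion that a strong classifying space for $M$ is a classifying space for $M$. The main obstacle, such as it is, is not $K$-theoretic at all but lies in the face-combinatorics: one must be careful that the bijection $F \mapsto F \cap M$ genuinely forces each intersection to be a non-empty \emph{open face} of $M$ in the manifold-with-corners sense, and that the transversality observed in the remark following Definition \ref{def.class} guarantees these intersections are faces rather than merely embedded submanifolds. Once that bookkeeping is settled, the statement is an immediate corollary of the two cited propositions.
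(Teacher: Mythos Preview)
Your proposal is correct and follows exactly the paper's approach: the paper's proof is the single sentence ``This was proved in Propositions \ref{prop.2} and \ref{prop.3},'' and you have simply unpacked this by matching conditions (i) and (ii) of Definition \ref{def.class} to the hypotheses of those two propositions. Your additional remarks about the face-combinatorics bookkeeping are reasonable caution but not strictly needed, since the paper treats the matching of hypotheses as immediate.
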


\begin{proof}
This was proved in Propositions \ref{prop.2} and \ref{prop.3}.
\end{proof}

Let $\iota : M \to X_M$ be a classifying space for $M$. The above
proposition then allows us to define (see diagram \ref{diag4})
\begin{equation*}
    \ind_t^M :=\iota_*^{-1} \circ \ind_a^X \circ \iota_{!} :
    K^*(A^*_M) \to \kc{M}.
\end{equation*}

If $M$ is a smooth compact manifold (so, in particular, $\pa M =
\emptyset$), then $C^*(M) = \maK$, the algebra of compact operators on
$L^2(M)$ and hence $K_0(C^*(M)) = \ZZ$. Any embedding $\iota : M
\hookrightarrow \RR^N$ will then be a classifying space for
$M$. Moreover, for $X = \RR^n$, the map $\iota_*^{-1} \circ \ind_a^X :
K^*(TX) \to \ZZ$ is the inverse of $j_! : K^0(pt) \to K^0(T\RR^N)$ and
hence $\ind_t^{\RR^N} = (j_!)^{-1} \iota_!$, which is the definition
of the topological index from \cite{AS1}. In view of this fact, we
shall also call the map $\ind_t^M$ {\em the topological index}
associated to $M$.  Theorem \ref{commutativity} then gives the
following result:

\begin{theorem}\label{thm.topological}\ 
Let $M$ be a manifold with corners and $A_M$ and $C^*(M)$ be the Lie
algebroid and the $C^*$-algebra associated to $M$. Then the principal
topological index map $\ind_t^M$ depends only on $M$, that is, it is
independent of the classifying space $X_M$, and we have
\begin{equation*}
    \ind_t^M = \ind_a^M : K^*(A^*_M) \to \kc{M}.
\end{equation*}
\end{theorem}

If $M$ is a smooth compact manifold ({\em no boundary}), this recovers
the Atiyah-Singer index theorem on the equality of the full
$C^*$-analytic and principal symbol topological index \cite{AS1}.

\section{$K$-theory of comparison algebras}

The isomorphism $K_*(C^*(M)) \simeq K^*(X_M)$ provides us with a way
of determining the groups $K_*(C^*(M))$. In particular, we have
completed the determination of the $K$-theory groups of the comparison
algebra $\fa_{-1}(M_0) = C^*(M)$, if the interior of $M$ is a endowed
with a metric making it a manifold with poly-cylindrical ends.

\begin{theorem} 
Let $M$ be a manifold with corners and embedded faces. We have
$K_*(C^*(M)) \simeq K^*(X_M)$. Moreover $K_j(C^*(M)) \otimes \QQ
\simeq \QQ^{p_j}$, where $p_j$ is the number of faces of $M$ of
dimension $\equiv j$ modulo $2$.
\end{theorem}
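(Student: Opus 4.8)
The plan is to combine the identification $K_*(C^*(M)) \simeq K^*(X_M)$ coming from a classifying space $X_M$ (available by Theorem~\ref{thm.topological} together with the preceding proposition) with a direct computation of $K^*(X_M)$ built up from the stratification of $X_M$ by its open faces. The first assertion is essentially immediate: if $\iota : M \hookrightarrow X_M$ is a strong classifying space, then $\iota_*$ and $\ind_a^{X_M}$ are isomorphisms, so $\ind_t^M = \iota_*^{-1}\circ \ind_a^{X_M}\circ \iota_!$ furnishes an isomorphism, and by Theorem~\ref{thm.topological} it agrees with $\ind_a^M$; hence $K_*(C^*(M)) \simeq K^*(A_{X_M}^*) \simeq K^*(X_M)$, the last step because each open face of $X_M$ is a Euclidean space so the cosphere/Thom considerations collapse $K^*(A_{X_M}^*)$ onto $K^*$ of the base. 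The bulk of the work, therefore, lies in the rational computation of $K^*(X_M)$.

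\emph{For the rational count,} I would first recall that by Definition~\ref{def.class} the open faces $F$ of $X_M$ are in bijection (via $F\mapsto F\cap M$) with the open faces of $M$, and each open face of $X_M$ is diffeomorphic to a Euclidean space. The closed faces give a finite, locally closed stratification of $X_M$ by strata $F \cong \RR^{\dim F}$. I would then run the long exact sequence in $K$-theory associated to the filtration of $X_M$ by skeleta (ordering faces by dimension, repeatedly excising an open top-dimensional stratum and using the six-term exact sequence for the pair). Since each open stratum $F \cong \RR^{d_F}$ contributes $K_c^*(\RR^{d_F})$, which is $\ZZ$ in degree $\equiv d_F \pmod 2$ and $0$ in the other degree, each face contributes a single generator, placed in degree $\equiv \dim F \pmod 2$.

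\emph{After tensoring with $\QQ$,} I would argue that the connecting maps in these six-term sequences vanish, so that the spectral sequence (equivalently, the iterated long exact sequences) degenerates rationally and
\begin{equation*}
    K_j(C^*(M)) \otimes \QQ \;\simeq\; K^j(X_M)\otimes\QQ \;\simeq\; \bigoplus_{\substack{F \text{ open face}\\ \dim F \equiv j \,(2)}} \QQ \;\simeq\; \QQ^{p_j},
\end{equation*}
with $p_j$ the number of open faces of $M$ of dimension $\equiv j \pmod 2$, using the face bijection to transfer the count from $X_M$ to $M$. The cleanest way to see the rational degeneration is to observe that $K^*(X_M)\otimes\QQ \simeq H^{\mathrm{even/odd}}(X_M;\QQ)$ via the Chern character, and that $H^*(X_M;\QQ)$ is freely generated by the faces because $X_M$ is rationally built out of contractible (Euclidean) cells glued along lower faces, so each face contributes exactly one rational class in its own parity.

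\emph{The hard part will be} controlling the connecting homomorphisms, i.e.\ justifying that the stratification by Euclidean faces really yields a cell-like decomposition whose associated boundary maps die after $\otimes\,\QQ$. Integrally these maps need not vanish, which is why the theorem only claims a rational splitting; rationally I expect to invoke that the Euler-characteristic / free-resolution argument forces the ranks to add up face-by-face, but the careful bookkeeping of which face sits in which degree, and the compatibility of the excision sequences with the parity grading, is where the genuine content resides.
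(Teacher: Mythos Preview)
Your approach coincides with the paper's. The paper's entire argument for the rational statement is the assertion that the Atiyah--Hirzebruch spectral sequence of $X_M$ collapses at $E^2$, with details deferred to separate joint work with Fieux; your filtration of $X_M$ by the skeleta of its open-face decomposition is precisely what generates that spectral sequence, and your Chern-character argument is the standard mechanism for its rational degeneration. The paper in fact asserts integral collapse at $E_2$, which is stronger than the rational degeneration you establish, but for the $\otimes\,\QQ$ statement your version is enough.

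One caution: your sentence that ``$H^*(X_M;\QQ)$ is freely generated by the faces because $X_M$ is built out of contractible cells'' is not justified by contractibility of the cells alone---cellular $d_1$ need not vanish just because each open cell is Euclidean. Getting from rational degeneration of the AHSS to the face-by-face count therefore requires genuine input about the incidence combinatorics of the face lattice. You correctly flag this as ``the hard part,'' and the paper does not supply that argument either; it is exactly the content exported to the external reference.
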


The last part of the above theorem is proved by showing that the
Atiyah-Hirzebruch spectral sequence of $X_M$ collapses at $E^2$. This
is part of a joint work with Etienne Fieux.

It is not difficult to construct a classifying manifold $X_M$ of $M$
\cite{MN}, that is, a manifold such that $\iota_* : K_0(C^*(M)) \to
K_0(C^*(X))$ and $\ind_a^X : K^0(A_X^*) \to K_0(C^*(X))$ are
isomorphisms. Let us assume $M$ is compact with embedded faces.  The
space $X_M$ is obtained from an embedding $X \to [0,\infty)^N$ for
  some large $N$, and then by removing suitable hyperplanes from the
  boundary of $[0,\infty)^N$ such that each face of $M$ is the
    transverse intersection of $M$ and of a face of $X_M$. See also
    \cite{Savin2, Savin1}.

\bibliographystyle{plain}

\bibliography{mn}

\def\cprime{$'$}
\begin{thebibliography}{10}

\bibitem{ALN1}
B.~Ammann, R.~Lauter, and V.~Nistor.
\newblock On the geometry of {R}iemannian manifolds with a {L}ie structure at
  infinity.
\newblock {\em Int. J. Math. Math. Sci.}, (1-4):161--193, 2004.

\bibitem{Skandalis}
I.~Androulidakis and G.~Skandalis.
\newblock The holonomy groupoid of a singular foliation.
\newblock {\em J. Reine Angew. Math.}, 626:1--37, 2009.

\bibitem{AS1}
M.~F. Atiyah and I.~M. Singer.
\newblock The index of elliptic operators, i.
\newblock {\em Annals of Math.}, 87:484--530, 1968.

\bibitem{M7}
D.~Barbasch and H.~Moscovici.
\newblock {$L^{2}$}-index and the {S}elberg trace formula.
\newblock {\em J. Funct. Anal.}, 53(2):151--201, 1983.

\bibitem{BenameurHeitsch}
M.-T. Benameur and J.. Heitsch.
\newblock Index theory and non-commutative geometry. {I}. {H}igher families
  index theory.
\newblock {\em $K$-Theory}, 33(2):151--183, 2004.

\bibitem{M4}
J.~Br{\"u}ning and H.~Moscovici.
\newblock {$L^2$}-index for certain {D}irac-{S}chr\"odinger operators.
\newblock {\em Duke Math. J.}, 66(2):311--336, 1992.

\bibitem{carvalho}
C.~Carvalho.
\newblock A topological approach to the cobordism invariance of the index.
\newblock {\em $K$-Theory}, 2006.

\bibitem{Connes1}
A.~Connes.
\newblock Sur la th\'eorie non commutative de l'int\'egration.
\newblock In {\em Alg\`ebres d'op\'erateurs (S\'em., Les Plans-sur-Bex, 1978)},
  volume 725 of {\em Lecture Notes in Math.}, pages 19--143. Springer, Berlin,
  1979.

\bibitem{Connes2}
A.~Connes.
\newblock A survey of foliations and operator algebras.
\newblock In {\em Operator algebras and applications, Part I (Kingston, Ont.,
  1980)}, volume~38 of {\em Proc. Sympos. Pure Math.}, pages 521--628. Amer.
  Math. Soc., Providence, R.I., 1982.

\bibitem{ConnesNCG}
A.~Connes.
\newblock {\em Noncommutative Geometry}.
\newblock Academic Press, Inc., 1994.

\bibitem{M6}
A.~Connes and H.~Moscovici.
\newblock Cyclic cohomology, the {N}ovikov conjecture and hyperbolic groups.
\newblock {\em Topology}, 29(3):345--388, 1990.

\bibitem{M3}
A.~Connes and H.~Moscovici.
\newblock The local index formula in noncommutative geometry.
\newblock {\em Geom. Funct. Anal.}, 5(2):174--243, 1995.

\bibitem{Cordes92}
H.~Cordes.
\newblock On essential selfadjointness of powers and comparison algebras.
\newblock {\em J. Anal. Math.}, 58:61--97, 1992.
\newblock Festschrift on the occasion of the 70th birthday of Shmuel Agmon.

\bibitem{Cordes77}
H.~Cordes and R.~McOwen.
\newblock The {$C\sp*$}-algebra of a singular elliptic problem on a noncompact
  {R}iemannian manifold.
\newblock {\em Math. Z.}, 153(2):101--116, 1977.

\bibitem{CordesBook}
H.~O. Cordes.
\newblock {\em Spectral theory of linear differential operators and comparison
  algebras}, volume~76 of {\em London Mathematical Society Lecture Note
  Series}.
\newblock Cambridge University Press, Cambridge, 1987.

\bibitem{CordesDoong}
H.~O. Cordes and S.~H. Doong.
\newblock The {L}aplace comparison algebra of spaces with conical and
  cylindrical ends.
\newblock In {\em Pseudodifferential operators ({O}berwolfach, 1986)}, volume
  1256 of {\em Lecture Notes in Math.}, pages 55--90. Springer, Berlin, 1987.

\bibitem{Debord2}
C.~Debord and J.-M. Lescure.
\newblock {$K$}-duality for stratified pseudomanifolds.
\newblock {\em Geom. Topol.}, 13(1):49--86, 2009.

\bibitem{Debord1}
C.~Debord, J.-M. Lescure, and V.~Nistor.
\newblock Groupoids and an index theorem for conical pseudo-manifolds.
\newblock {\em J. Reine Angew. Math.}, 628:1--35, 2009.

\bibitem{Georgescu}
V.~Georgescu.
\newblock On the spectral analysis of quantum field {H}amiltonians.
\newblock {\em J. Funct. Anal.}, 245(1):89--143, 2007.

\bibitem{Gorokhovsky}
A.~Gorokhovsky and J.~Lott.
\newblock Local index theory over foliation groupoids.
\newblock {\em Adv. Math.}, 204(2):413--447, 2006.

\bibitem{HunsickerNS}
E.~Hunsicker, V.~Nistor, and J.~Sofo.
\newblock Analysis of periodic {S}chr\"odinger operators: regularity and
  approximation of eigenfunctions.
\newblock {\em J. Math. Phys.}, 49(8):083501, 21, 2008.

\bibitem{Landsman}
N.~Landsman.
\newblock Quantization and the tangent groupoid.
\newblock In {\em Operator algebras and mathematical physics (Constan\c{t}a,
  2001)}, pages 251--265. Theta, Bucharest, 2003.

\bibitem{LMN1}
R.~Lauter, B.~Monthubert, and V.~Nistor.
\newblock Pseudodifferential analysis on continuous family groupoids.
\newblock {\em Doc.\ Math.}, 5:625--655 (electronic), 2000.

\bibitem{LMN2}
R.~Lauter, B.~Monthubert, and V.~Nistor.
\newblock Spectral invariance for certain algebras of pseudodifferential
  operators.
\newblock {\em J. Inst. Math. Jussieu}, 4(3):405--442, 2005.

\bibitem{LNgeometric}
R.~Lauter and V.~Nistor.
\newblock Analysis of geometric operators on open manifolds: a groupoid
  approach.
\newblock In {\em Quantization of singular symplectic quotients}, volume 198 of
  {\em Progr. Math.}, pages 181--229. Birkh\"auser, Basel, 2001.

\bibitem{HenggMN}
H.~Li, A.~Mazzucato, and V.~Nistor.
\newblock Analysis of the finite element method for transmission/mixed boundary
  value problems on general polygonal domains.
\newblock {\em Electronic Transactions Numerical Analysis}, 37:41--69, 2010.

\bibitem{MelroseScattering}
R.~Melrose.
\newblock {\em Geometric scattering theory}.
\newblock Stanford Lectures. Cambridge University Press, Cambridge, 1995.

\bibitem{MelroseNistor}
R.~Melrose and V.~Nistor.
\newblock {$K$}-theory of {$C^*$}-algebras of {$b$}-pseudodifferential
  operators.
\newblock {\em Geom. Funct. Anal.}, 8(1):88--122, 1998.

\bibitem{bm-these}
B.~Monthubert.
\newblock {\em Groupo\"\i des et calcul pseudo-diff\'erentiel sur les
  vari\'et\'es \`a coins}.
\newblock PhD thesis, Universit\'e Paris 7, 1998.

\bibitem{bm1}
B.~Monthubert.
\newblock Pseudodifferential calculus on manifolds with corners and groupoids.
\newblock {\em Proc. Amer. Math. Soc.}, 127(10):2871--2881, 1999.

\bibitem{MN}
B.~Monthubert and V.~Nistor.
\newblock A principal symbol topological index theorem for manifolds with
  corners.
\newblock Preprint.

\bibitem{bm-fp}
B.~Monthubert and F.~Pierrot.
\newblock Indice analytique et groupo\"\i des de {L}ie.
\newblock {\em C. R. Acad. Sci. Paris S\'er. I Math.}, 325(2):193--198, 1997.

\bibitem{M5}
H.~Moscovici and R.~Stanton.
\newblock {$R$}-torsion and zeta functions for locally symmetric manifolds.
\newblock {\em Invent. Math.}, 105(1):185--216, 1991.

\bibitem{M8}
H.~Moscovici and A.~Verona.
\newblock Harmonically induced representations of nilpotent {L}ie groups.
\newblock {\em Invent. Math.}, 48(1):61--73, 1978.

\bibitem{renault-equiv}
P.~Muhly, J.~Renault, and D.~Williams.
\newblock {Equivalence and isomorphism for groupoid C *-algebras.}
\newblock {\em J. Operator Theory}, 17(1):3--22, 1987.

\bibitem{Savin2}
V.~Nazaikinskii, A.~Savin, and B.~Sternin.
\newblock Elliptic theory on manifolds with corners. {I}. {D}ual manifolds and
  pseudodifferential operators.
\newblock In {\em {$C^\ast$}-algebras and elliptic theory {II}}, Trends Math.,
  pages 183--206. Birkh\"auser, Basel, 2008.

\bibitem{Savin1}
V.~Naza{\u\i}kinski{\u\i}, A.~Savin, and B.~Sternin.
\newblock Noncommutative geometry and the classification of elliptic operators.
\newblock {\em Sovrem. Mat. Fundam. Napravl.}, 29:131--164, 2008.

\bibitem{NestTsygan}
R.~Nest and B.~Tsygan.
\newblock Algebraic index theorem.
\newblock {\em Comm. Math. Phys.}, 172(2):223--262, 1995.

\bibitem{nistorINT}
V.~Nistor.
\newblock Groupoids and the integration of {L}ie algebroids.
\newblock {\em J. Math. Soc. Japan}, 52:847--868, 2000.

\bibitem{NWX}
V.~Nistor, A.~Weinstein, and P.~Xu.
\newblock Pseudodifferential operators on groupoids.
\newblock {\em Pacific J. Math.}, 189:117--152, 1999.

\bibitem{Ramazan}
B.~Ramazan.
\newblock Limite classique de {$C\sp *$}-alg\`ebres de groupo\"\i des de {L}ie.
\newblock {\em C. R. Acad. Sci. Paris S\'er. I Math.}, 329(7):603--606, 1999.

\bibitem{ScSc1}
E.~Schrohe and B.-W. Schulze.
\newblock Boundary value problems in {B}outet de {M}onvel's algebra for
  manifolds with conical singularities. {I}.
\newblock In {\em Pseudo-differential calculus and mathematical physics},
  volume~5 of {\em Math. Top.}, pages 97--209. Akademie Verlag, Berlin, 1994.

\bibitem{SchulzeBook}
B.~Schulze.
\newblock {\em {Boundary value problems and singular pseudo-differential
  operators.}}
\newblock {Wiley-Interscience Series in Pure and Applied Mathematics.
  Chichester: John Wiley \& Sons.}, 1998.

\bibitem{Strichartz}
R.~Strichartz.
\newblock Harmonic analysis as spectral theory of {L}aplacians.
\newblock {\em J. Funct. Anal.}, 87(1):51--148, 1989.

\bibitem{TaylorGelfand}
M.~Taylor.
\newblock Gelfand theory of pseudo differential operators and hypoelliptic
  operators.
\newblock {\em Trans. Amer. Math. Soc.}, 153:495--510, 1971.

\end{thebibliography}

\end{document}